\DeclareMathOperator{\ad}{ad}
\newcommand{\e}{\varepsilon}
\newcommand{\I}{{\mathcal I}}
\newcommand{\K}{{\mathcal K}}
\newcommand{\N}{{\mathcal N}}
\newcommand{\p}{\partial}
\newcommand{\s}{{\rm Symb}}
\renewcommand{\S}{\Sigma}
\newcommand{\T}{{T^\ast M}}
\newtheorem{lemma}{Lemma}
\newtheorem{proposition}{Proposition}
\newtheorem{theorem}{Theorem}
\begin{document}
\title[Deformations of a formal symplectic groupoid]
{Infinitesimal deformations of a formal symplectic groupoid}
\author[Alexander Karabegov]{Alexander Karabegov}
\address[Alexander Karabegov]{Department of Mathematics, Abilene Christian University, ACU Box 28012, Abilene, TX 79699-8012}
\email{axk02d@acu.edu}

\begin{abstract}
Given a formal symplectic groupoid $G$ over a Poisson
manifold $(M,\pi_0)$, we define a new object, an infinitesimal deformation
of $G$, which can be thought of as a formal symplectic groupoid
over the manifold $M$ equipped with an infinitesimal deformation
$\pi_0 + \e \pi_1$ of the Poisson bivector field $\pi_0$. The source and target
mappings of a deformation of $G$ are deformations of the source and
target mappings of $G$. To any pair of natural star products $(\ast,\tilde\ast)$
having the same formal symplectic groupoid $G$ we relate an infinitesimal
deformation of $G$. We call it the deformation groupoid of
the pair $(\ast,\tilde\ast)$. We give explicit formulas for the source and target
mappings of the deformation groupoid of a pair of star products
with separation of variables on a K\"ahler-Poisson manifold. Finally,
we give an algorithm for calculating the principal symbols of the
components of the logarithm of a formal Berezin transform of a
star product with separation of variables. This algorithm is based
upon some deformation groupoid.
\end{abstract}
\subjclass{53D55, 53D17}
\keywords{formal symplectic groupoid, deformation quantization with separation of variables}

\date{June 14, 2010}
\maketitle

\section{Introduction}

Symplectic groupoids introduced in \cite{Ka}, \cite{W}, and \cite{Z}, are heuristic semiclassical counterparts of associative algebras treated as quantum objects (see \cite{CDF}). Passing to formal objects allows to make this correspondence precise. Namely, to each (natural) formal star algebra one can relate a formal symplectic groupoid (\cite{CMP3}).

In \cite{CDF} a formal symplectic groupoid was defined in terms of a formal generating function of the Lagrangian product space. This formal generating function was then calculated for the formal symplectic groupoid of the Kontsevich star product (see \cite{K}). 

In \cite {CMP3} a formal symplectic groupoid was defined in terms of the formal neighborhood of the Lagrangian unit space in an ambient symplectic manifold. It was shown that to each natural star product on a Poisson manifold $M$ one can relate a formal symplectic groupoid on the formal neighborhood of the zero section of the cotangent bundle $\T$. In~ \cite{LMP2} the source and target mappings of the formal symplectic groupoid of Fedosov's star product were calculated  (see \cite{F1}). In \cite{CMP3} it was shown that the formal symplectic groupoid of a star product with separation of variables can be characterized by the ``separation of variables" property and that for any K\"ahler-Poisson manifold $M$ there exists a unique formal symplectic groupoid with separation of variables over ~$M$.

Given a Poisson manifold $M$ equipped with a Poisson bivector field $\pi_0$, a pair of natural star products $(\ast,\tilde \ast)$ on $(M,\pi_0)$ having the same formal symplectic groupoid $G$ determine an infinitesimal deformation $\pi_0 + \e \pi_1$ of the Poisson bivector field $\pi_0$. It turns out that one can relate to the pair $(\ast,\tilde \ast)$ a ``deformation" formal symplectic groupoid over the manifold $(M, \pi_0 + \e \pi_1)$. Its source and target mappings  are infinitesimal deformations of the source and target mappings of  $G$. 

Given a K\"ahler-Poisson manifold $M$ and an infinitesimal deformation $\pi_0 + \e \pi_1$ of the K\"ahler-Poisson bivector field $\pi_0$, there exists a unique deformation groupoid with separation of variables over $(M, \pi_0 + \e \pi_1)$. We give explicit formulas of its source and target mappings. 

It was shown in \cite{CMP3} that if the formal Berezin transform $B$ of a star product with separation of variables on a K\"ahler-Poisson manifold $M$ is written in the exponential form, $B = \exp \frac{1}{\nu}X$, with $X = \nu^2 X_2 + \nu^3 X_3 + \ldots$, then the operators $X_{2k}$ and $X_{2k+1}$ are of order not greater than $2k$. In \cite{CMP3} we gave an algorithm of calculating the principal symbol of order $2k$ of the operator $X_{2k}$ for all $k \geq 1$ in terms of the formal symplectic groupoid with separation of variables over $M$. In this paper we give a calculation algorithm of the principal symbol of order $2k$ of the operator $X_{2k+1}$ for all $k \geq 1$ in terms of some deformation groupoid with separation of variables over $M$. It was the main motivation of this paper to find this algorithm.

{\bf Acknowledgments.} I am very grateful to Professor Daniel Sternheimer and to the referees for the suggestions which helped me to make important changes to this paper.

\section{Deformation formal symplectic groupoids}

Given a manifold $X$ with a submanifold $Y \subset X$, denote by $I_Y$ the vanishing ideal of $Y$ in $C^\infty (X)$. Then the quotient algebra
\[
     C^\infty(X,Y) : = C^\infty(X)/(\cap_{l=1}^\infty I_Y^l)
\]
can be thought of as the algebra of smooth functions on the formal neighborhood $(X,Y)$ of the submanifold $Y$ in $X$.

A formal symplectic groupoid over a Poisson manifold $(M,\pi)$ is given by the following collection of data: a symplectic manifold $\S$ with a Lagrangian submanifold $\Lambda$ which is a copy of $M$, the identification inclusion $\epsilon: M \to \S$ with $\epsilon(M) = \Lambda$, its dual $E: C^\infty(\S,\Lambda) \to C^\infty(M)$, a Poisson bracket 
$\{\cdot,\cdot\}_\S$ on $C^\infty(\S,\Lambda)$ induced by the symplectic structure on $\S$, source and target mappings
\[
     S,T: C^\infty(M) \to C^\infty(\S,\Lambda),
\] 
an inverse mapping, and a comultiplication mapping satisfying a number of axioms. 
In particular, $ESf = f, ETf = f$ for any $f \in C^\infty(M)$, $S$ is a Poisson mapping, $T$ is an anti-Poisson mapping, and for any $f,g\in C^\infty(M)$ the elements $Sf$ and $Tg$ Poisson commute with respect to the Poisson bracket $\{\cdot,\cdot\}_\S$. The details can be found in \cite{CMP3}. A formal symplectic groupoid is completely determined by its source mapping. Namely, given manifolds $M, \Sigma, \Lambda$ as above, the dual inclusion mapping $E: C^\infty(\S,\Lambda) \to C^\infty(M)$, and a Poisson mapping $S:C^\infty(M) \to C^\infty(\S,\Lambda)$ such that $ESf = f$ for any $f \in C^\infty(M)$, then there exists a unique formal symplectic groupoid on $(\S,\Lambda)$ for which $S$ is the source mapping.

Now let $(M,\pi_0)$ be a Poisson manifold and $\pi_1$ be a bivector field on $M$ such that the Schouten bracket of $\pi_0$ and $\pi_1$ vanishes, $[\pi_0,\pi_1]=0$, i.e., $\pi_1$ is a Poisson 2-cocycle. Denote by $\e$ a formal nilpotent parameter such that $\e^2 =0$. Then $[\pi_0 + \e \pi_1, \pi_0 + \e \pi_1]=0$, which means that $\pi_0 + \e \pi_1$ can be thought of as an infinitesimal deformation of the Poisson tensor $\pi_0$. One can define a formal symplectic groupoid over the manifold $(M, \pi_0 + \e \pi_1)$ using the same approach as in \cite{CMP3}. We will call it a deformation formal symplectic groupoid over $(M, \pi_0 + \e \pi_1)$. We will specify the definitions of the source and target mappings of a deformation formal symplectic groupoid. 

Let, as above, $\S$ be a symplectic manifold with a Lagrangian submanifold $\Lambda \subset\S$ identified with $M$ via an inclusion $\epsilon: M \to \S$ such that $\epsilon(M) = \Lambda$, and $E: C^\infty(\S,\Lambda) \to C^\infty(M)$ be the dual mapping of $\epsilon$.  We equip $C^\infty(M)[\e]$ with the deformed Poisson bracket $\{\cdot,\cdot\} = \{\cdot,\cdot\}_0 + \e \{\cdot,\cdot\}_1$ corresponding to $\pi_0 + \e \pi_1$ and extend the Poisson bracket $\{\cdot,\cdot\}_{\S}$ to $C^\infty(\S, \Lambda)[\e]$ by $\e$-linearity. The source and target  mappings of a deformation formal symplectic groupoid over $(M, \pi_0 + \e \pi_1)$ are defined as $\e$-linear mappings
\[
   S, T: C^\infty(M)[\e] \to C^\infty(\S, \Lambda)[\e],
\]
which are a Poisson and an anti-Poisson morphism of Poisson algebras, respectively: for any $f,g \in C^\infty(M)[\e]$,
\begin{align}\label{E:deformed}
   E(Sf) = f, \ E(Tf) = f, \ S(fg) = Sf\ Sg, \ T(fg) = Tf\ Tg,  \\
S(\{f,g\}) = \{Sf, Sg\}_{\S},
  T(\{f,g\}) =-\{Tf, Tg\}_{\S}, \mbox{ and } \{Sf, Tg\}_{\S}=0. \nonumber
\end{align}
The mappings $S$ and $T$ can be written in components: for $f \in C^\infty(M)$,
\[
   Sf = S_0 f + \e S_1 f \mbox{ and } Tf = T_0 f + \e T_1 f.
\]
It follows from equations (\ref{E:deformed}) that $S_0$ and $T_0$ satisfy, respectively, the axioms of the source and target mappings of a formal symplectic groupoid over 
$(M, \pi_0)$. For $f,g \in C^\infty(M)$ we have three groups of equations on the components $S_i,T_i, i = 0,1$:
\begin{equation}\label{E:unit}
    E(S_1 f) =0,\ E(T_1 f) =0;
\end{equation}
\begin{align}\label{E:product}
S_1(fg) = S_0f \, S_1g + S_1 f\, S_0 g, \\
T_1(fg) = T_0f\, T_1g + T_1 f\, T_0 g; \nonumber
\end{align}
and
\begin{align}\label{E:bracket}
  S_1\{f,g\}_0 + S_0\{f,g\}_1 = \{S_0 f,S_1 g\}_{\S} + \{S_1 f,S_0 g\}_{\S},\nonumber \\
T_1\{f,g\}_0 + T_0\{f,g\}_1 = -\{T_0 f,T_1 g\}_{\S} - \{T_1 f,T_0 g\}_{\S}, \\
\{S_0 f, T_1 g\}_{\S} + \{S_1 f, T_0 g\}_{\S}=0.\nonumber
\end{align}
It was shown in \cite{CMP3} that to any natural star product on a Poisson manifold $M$ one can relate a formal symplectic groupoid over $M$. In the next section we will show that, given two natural star products on a Poisson manifold $(M,\pi_0)$ with the same formal symplectic groupoid $G$, one can construct a deformation formal symplectic groupoid over $(M, \pi_0 + \e \pi_1)$, where $\pi_1$ is determined by the pair of star products. The zero components $S_0$ and $T_0$ of the source and target mappings of the deformation groupoid are the source and target mappings of the formal symplectic groupoid $G$, respectively.

\section{Deformation formal symplectic groupoid of a pair of star products}

In what follows $\nu$ denotes a formal parameter. A formal differential operator $A = A_0 + \nu A_1 + \ldots$ on a manifold $M$ is called natural if the order of the differential operator $A_r$ is not greater than $r$ for all $r \geq 0$ (see \cite{LMP2}). The natural formal differential operators on $M$ form an algebra which we denote by $\N$.
The principal symbol $\s_r(A_r)$ of $A_r$ is a homogeneous fibrewise polynomial function of order $r$ on $\T$.  The formal series
\[
   \sigma(A) = \sum_{r = 0}^\infty \s_r(A_r)
\]
can be treated as an element of $C^\infty(\T,Z)$, where $Z$ is the zero section of $\T$. We call $\sigma(A)$ the $\sigma$-symbol of the natural formal differential operator $A$. The mapping
\[
    \sigma: \N \to C^\infty(\T,Z)
\]
is a homomorphism whose kernel is $\nu \N$. Moreover, for any $A,B \in \N$,
\[
     \sigma \left(\frac{1}{\nu} [A,B]\right) = \{\sigma(A),\sigma(B)\}_{\T},
\]
where $\{\cdot,\cdot\}_{\T}$ is the Poisson bracket corresponding to the standard symplectic structure on the cotangent bundle $\T$.

A formal differential star product $\ast$ on a Poisson manifold $(M,\pi)$ (see \cite{BFFLS}) is an associative product on the space of formal functions $C^\infty(M)[[\nu]]$ given by the formula
\[
     f \ast g = \sum_{r=0}^\infty \nu^r C_r(f,g).
\]
The operators $C_r$ are bidifferential, $C_0(f,g) = fg$, and $C_1(f,g) - C_1(g,f) = \{f,g\}$, where $\{\cdot,\cdot\}$ is the Poisson bracket given by the bivector field $\pi$. The unit constant 1 is the unity of the star product. It was proved by Kontsevich in \cite{K} that star products exist on arbitrary Poisson manifolds.

A differential star product can be restricted to an open set. Thus one can consider star products of local functions. A star product is called natural (see \cite{GR}) if the bidifferential operator $C_r$ is of order not greater than $r$ in each argument. Star products of Fedosov and Kontsevich and star products with separation of variables are natural (see \cite{GR},\cite{BW},\cite{LMP2}, and \cite{CMP3}). Given $f,g \in C^\infty(M) [[\nu]]$, denote by $L_f$ and $R_g$ the left star multiplication operator by $f$ and the right star multiplication operator by $g$, respectively, so that $L_f g = f \ast g = R_g f$. The associativity of $\ast$ is equivalent to the fact that $[L_f,R_g]=0$ for any $f,g$. A star product $\ast$ is natural if and only if the operators $L_f, R_f$  are natural for any $f \in C^\infty(M) [[\nu]]$. 

In \cite{CMP3} we introduced a formal symplectic groupoid of a natural formal deformation quantization on a Poisson manifold $M$. This groupoid is defined on the formal neighborhood $(\T,Z)$ of the zero section of the cotangent bundle of $M$. The source and target mappings of this groupoid are defined as follows: for $f\in C^\infty(M)$,
\[
    Sf = \sigma(L_f), \ Tf = \sigma(R_f). 
\]

{\it Remark.} Observe that the expression $\sigma(L_f)$ used to define the source mapping of the formal symplectic groupoid of a deformation quantization makes sense for any formal function $f = f_0 + \nu f_1 + \ldots$. However, this expression depends only on the component $f_0$, since the operator $L_{\nu^r f_r} \in \nu^r N$ is in the kernel of the mapping $\sigma$ for any $r \geq 1$. Same is true for the expression used to define the target mapping.

Now let $\ast$ and $\tilde \ast$ be two natural star products on a Poisson manifold $(M,\pi_0)$ defined by bidifferential operators $\{C_r\}$ and $\{\tilde C_r\}$, respectively. The left and right star multiplication operators by a formal function $f$ for these star products will be denoted $L_f, R_f$ and $\tilde L_f,\tilde R_f$, respectively. Assume that these star products have the same formal symplectic groupoid with the source and target mappings denoted by $S_0$ and $T_0$, respectively, so that
\begin{equation}\label{E:szerotzero}
   S_0f = \sigma(L_f) =  \sigma(\tilde L_f) \mbox{ and } T_0f = \sigma(R_f) =  \sigma(\tilde R_f).
\end{equation}

{\it Remark.} The condition that two natural star products have the same formal symplectic groupoid is not very restrictive. It can be derived from the results of \cite{CMP3} that for any two natural star products $\ast_1$ and $\ast_2$ on a Poisson manifold $M$ one can find a natural star product $\ast$ on $M$ which is equivalent to the star product $\ast_2$ and has the same formal symplectic groupoid as $\ast_1$.

\begin{lemma}\label{L:equal}
For the natural star products $\ast$ and $\tilde\ast$ with the same formal symplectic groupoid the bidifferential operators $C_1$ and $\tilde C_1$ coincide.
\end{lemma}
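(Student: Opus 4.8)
The plan is to extract $C_1$ (resp.\ $\tilde C_1$) from the $\sigma$-symbol data, using the fact that the source mapping $S_0$ only records the principal symbol of each Taylor component of the left-multiplication operator. First I would write out $L_f$ in $\nu$-components: $L_f g = fg + \nu C_1(f,g) + \nu^2 C_2(f,g)+\ldots$, so as a natural differential operator in $g$ (with $f$ fixed) we have $L_f = \mu_f + \nu (C_1)_f + \ldots$, where $\mu_f$ is multiplication by $f$ and $(C_1)_f$ denotes the differential operator $g \mapsto C_1(f,g)$, which has order $\le 1$. Then $\sigma(L_f) = \s_0(\mu_f) + \s_1((C_1)_f) + (\text{higher order terms})$. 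The order-$0$ part is just the function $f$ on $Z$, and the order-$1$ part $\s_1((C_1)_f)$ is a fibrewise-linear function on $\T$, i.e.\ a vector field on $M$; call it $X_f$. The hypothesis $\sigma(L_f)=\sigma(\tilde L_f)$ forces, degree by degree, $\s_1((C_1)_f) = \s_1((\tilde C_1)_f)$, hence $X_f = \tilde X_f$ for every $f \in C^\infty(M)$.

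Next I would argue that knowing, for every $f$, the principal symbol $X_f$ of the order-$\le 1$ operator $(C_1)_f$ — which is the full operator up to its order-$0$ part — plus one extra normalization pins down $(C_1)_f$ completely. The order-$0$ part of $(C_1)_f$ is $g \mapsto C_1(f,1)$, which vanishes because $1$ is the unit of the star product: $f \ast 1 = f$ gives $C_1(f,1)=0$ for all $f$, and likewise $\tilde C_1(f,1)=0$. Therefore $(C_1)_f$ and $(\tilde C_1)_f$ are both differential operators of order exactly $\le 1$ with vanishing constant term, so each is determined by its principal symbol: $(C_1)_f = X_f = \tilde X_f = (\tilde C_1)_f$. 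Since this holds for all $f$, the bidifferential operators $C_1$ and $\tilde C_1$ agree.

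An equivalent and perhaps cleaner route, which I would mention as an alternative, uses the target mapping symmetrically: from $T_0 f = \sigma(R_f) = \sigma(\tilde R_f)$ one gets in the same way that $g \mapsto C_1(g,f)$ and $g \mapsto \tilde C_1(g,f)$ have equal principal symbols and vanishing constant terms (using $1 \ast f = f$), hence are equal; combining the two gives full control of $C_1$ in each argument. Either way the computation is short once the symbol bookkeeping is set up.

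The only point requiring a little care — the ``main obstacle'' such as it is — is the passage from ``equal $\sigma$-symbols'' to ``equal $\s_1$ of the degree-one pieces.'' One must check that the decomposition $\sigma(L_f) = \sum_r \s_r((L_f)_r)$ is by \emph{homogeneity degree on the fibres} of $\T$, so that equality of the two formal sums in $C^\infty(\T,Z)$ forces equality in each fibrewise-homogeneous degree separately; in particular the degree-$1$ component of $\sigma(L_f)$ is exactly $\s_1$ of the $\nu^1$-coefficient operator $(C_1)_f$ (the operator $\mu_f$ contributes only in degree $0$, and $(L_f)_r$ for $r\ge 2$ contributes only in degrees $\ge 2$, since $(L_f)_r = (C_r)_f$ has order $\le r$ and no lower-order terms survive into degree $1$ once we also note $C_r(f,1)=0$ — though in fact we only need that $(C_r)_f$ contributes nothing in degree $1$, which follows from naturality alone if $r \ge 2$). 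This is routine given the properties of $\sigma$ recalled in the excerpt, so the lemma follows.
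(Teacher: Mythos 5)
Your argument is correct and is essentially the paper's own proof: fix $f$, note that $g \mapsto C_1(f,g)$ and $g \mapsto \tilde C_1(f,g)$ are operators of order at most one annihilating constants (by naturality and the unity axiom), and that their order-one principal symbols both equal the fibrewise-degree-one component of $S_0 f = \sigma(L_f) = \sigma(\tilde L_f)$, hence the operators coincide. Your extra bookkeeping about the homogeneous decomposition of the $\sigma$-symbol, and the symmetric variant via $T_0$ and $R_f$, are sound but add nothing beyond the paper's argument.
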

\begin{proof}
Fix an arbitrary function $f \in C^\infty (M)$ and consider the operators $Ag = C_1(f,g)$ and $\tilde Ag = \tilde C_1(f,g)$ on $C^\infty(M)$. Since the star products $\ast$ and $\tilde\ast$ are natural, the operators $A$ and $\tilde A$ are of order not greater than one. Since the unit constant 1 is the unity for both products, these operators annihilate constants. Their principal symbols of order one are both equal to the homogeneous component of order one of the source mapping $S_0f$. Therefore these operators are equal, which implies that $C_1 = \tilde C_1$. 
\end{proof}
In what follows we will use the notation 
\[
     D_r(f,g) := C_r(f,g) - C_r(g,f) \mbox{ and } \tilde D_r(f,g) := \tilde C_r(f,g) - \tilde C_r(g,f)
\]
for $r \geq 1$. Denote by $\{\cdot,\cdot\}_0$ the Poisson bracket on $M$ given by the Poisson tensor $\pi_0$. Then $D_1(f,g) = \tilde D_1(f,g) = \{f,g\}_0$. Since $C_1 = \tilde C_1$, it can be checked directly that the skew-symmetric bidifferential operator 
\[
   \{f,g\}_1 := D_2(f,g)  - \tilde D_2(f,g)
\]
is a derivation in each argument and is determined by a bivector field $\pi_1$ which is a Poisson 2-cocycle with respect to the Poisson bivector field $\pi_0$.  Our goal is to construct  a deformation formal symplectic groupoid over  $(M, \pi_0 + \e \pi_1)$ from the star products $\ast$ and $\tilde \ast$.

We define $\e$-linear mappings
\[
     S,T: C^\infty(M)[\e] \to C^\infty(\T,Z)[\e]
\] 
written in components as $S = S_0 + \e S_1$ and $T = T_0 + \e T_1$, as follows. We take  $S_0$ and $T_0$ as in (\ref{E:szerotzero}) and set
\begin{equation}\label{E:sonetonestar}
     S_1f = \sigma\left( \frac{1}{\nu}(L_f - \tilde L_f)\right) \mbox{ and } T_1f = \sigma\left( \frac{1}{\nu}(R_f - \tilde R_f)\right)
\end{equation}
for $f \in C^\infty(M)$. We will show that the mappings $S$ and $T$ satisfy the axioms of a source and a target mapping, respectively. This will imply the existence of a unique deformation formal symplectic groupoid over $(M, \pi_0 + \e \pi_1)$ with the source mapping $S$ and the target mapping $T$.

{\it Remark.} Since the formal differential operators $L_f$ and $\tilde L_f$ are natural and have the same $\sigma$-symbol, we see that $L_f - \tilde L_f \in \nu \N$, which justifies that $S_1$ is well defined. Same argument shows that the mapping $T_1$ is also well defined.

{\it Remark.} It is easy to check that if the function $f$ in the expressions defining the mappings $S_1$ and $T_1$ is formal, $f = f_0 + \nu f_1 + \ldots$, then these expressions depend only on $f_0$.

\begin{proposition}
The mappings $S_i, T_i, i =0,1$, related to the star products $\ast$ and $\tilde\ast$ satisfy axioms (\ref{E:unit}), (\ref{E:product}), and (\ref{E:bracket}).
\end{proposition}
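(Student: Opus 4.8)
The plan is to derive all of (\ref{E:unit})--(\ref{E:bracket}) from the two structural properties of the $\sigma$-symbol recalled above: $\sigma\colon\N\to C^\infty(\T,Z)$ is an algebra homomorphism with $\ker\sigma=\nu\N$, and $\sigma\bigl(\tfrac1\nu[A,B]\bigr)=\{\sigma(A),\sigma(B)\}_{\T}$ for $A,B\in\N$. Since $\ast$ and $\tilde\ast$ share the formal symplectic groupoid, $L_f-\tilde L_f$ and $R_f-\tilde R_f$ lie in $\nu\N$, so the natural operators $M_f:=\tfrac1\nu(L_f-\tilde L_f)$ and $N_f:=\tfrac1\nu(R_f-\tilde R_f)$ are well defined and, by (\ref{E:sonetonestar}), $\sigma(M_f)=S_1f$, $\sigma(N_f)=T_1f$. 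Throughout I would write $\tilde L_f=L_f-\nu M_f$ and $\tilde R_f=R_f-\nu N_f$, expand the relevant products and commutators, and push $\sigma$ through, repeatedly using that commutators of natural operators lie in $\nu\N=\ker\sigma$.

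For (\ref{E:unit}): $M_f$ annihilates the unit, $M_f1=\tfrac1\nu(f\ast1-f\,\tilde\ast\,1)=\tfrac1\nu(f-f)=0$, hence its $\nu^0$-component $(M_f)_0$ (a zeroth-order differential operator, by naturality) is multiplication by $(M_f)_01=0$; thus $S_1f=\sigma(M_f)$ vanishes on the zero section, i.e.\ $E(S_1f)=0$, and likewise $E(T_1f)=0$. For (\ref{E:product}): using $L_fL_g=L_{f\ast g}$ and $\nu$-linearity of $h\mapsto L_h$, write $L_{fg}=L_fL_g-L_{f\ast g-fg}$ and the analogue for $\tilde\ast$; subtracting and dividing by $\nu$ turns $\tfrac1\nu(L_fL_g-\tilde L_f\tilde L_g)$ into $M_fL_g+\tilde L_fM_g$ with $\sigma$-image $S_1f\,S_0g+S_0f\,S_1g$, while $\tfrac1\nu(L_{f\ast g-fg}-\tilde L_{f\,\tilde\ast\,g-fg})$ has $\sigma$-image $S_0(C_1(f,g))-S_0(\tilde C_1(f,g))=0$ by Lemma~\ref{L:equal}. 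This is the first line of (\ref{E:product}); the second is identical via $R_gR_f=R_{f\ast g}$.

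The core is (\ref{E:bracket}). For its first line I would start from
\[
[\tilde L_f,\tilde L_g]=[L_f,L_g]-\nu[L_f,M_g]-\nu[M_f,L_g]+\nu^2[M_f,M_g],
\]
insert $[L_f,L_g]=L_{f\ast g-g\ast f}$ and $[\tilde L_f,\tilde L_g]=\tilde L_{f\,\tilde\ast\,g-g\,\tilde\ast\,f}$, solve for $\tfrac1\nu([L_f,M_g]+[M_f,L_g])$, and apply $\sigma$. The left side gives $\{S_0f,S_1g\}_{\T}+\{S_1f,S_0g\}_{\T}$; on the right, $\sigma([M_f,M_g])=0$, and expanding $f\ast g-g\ast f=\sum_{r\ge1}\nu^rD_r(f,g)$ together with its tilde version, the term $\tfrac1{\nu^2}(\tilde L_{f\,\tilde\ast\,g-g\,\tilde\ast\,f}-L_{f\ast g-g\ast f})$ contributes only through $r=1$ and $r=2$, giving, via $D_1=\tilde D_1=\{\cdot,\cdot\}_0$ and $\{f,g\}_1=D_2(f,g)-\tilde D_2(f,g)$, exactly $-S_1\{f,g\}_0-S_0\{f,g\}_1$. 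Comparing signs yields the first line of (\ref{E:bracket}); the second is the same with $R$ for $L$ and $[R_f,R_g]=-R_{f\ast g-g\ast f}$. The third line is quickest: associativity of $\ast$ and $\tilde\ast$ gives $[L_f,R_g]=0=[\tilde L_f,\tilde R_g]$, so the analogous expansion forces $[L_f,N_g]+[M_f,R_g]=\nu[M_f,N_g]$, hence $\tfrac1\nu([L_f,N_g]+[M_f,R_g])=[M_f,N_g]\in\nu\N$, and applying $\sigma$ gives $\{S_0f,T_1g\}_{\T}+\{S_1f,T_0g\}_{\T}=0$.

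I expect the main obstacle to be the $\nu$-order bookkeeping in the first two lines of (\ref{E:bracket}): one has to confirm that every correction term---the $fg$-versus-$f\ast g$ discrepancies and the pieces quadratic in $M_\bullet,N_\bullet$---divides by the right power of $\nu$ and lands in $\ker\sigma$, and one has to read off the coefficient of $\nu^2$ in $\tilde L_{f\,\tilde\ast\,g-g\,\tilde\ast\,f}-L_{f\ast g-g\ast f}$ as precisely the operator with $\sigma$-symbol $S_1\{f,g\}_0+S_0\{f,g\}_1$. Lemma~\ref{L:equal} (equivalently $D_1=\tilde D_1$) is what forces the order-$\nu$ discrepancies to vanish, and it is used essentially at each step.
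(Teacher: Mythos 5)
Your proposal is correct and follows essentially the same route as the paper: both arguments rest on the homomorphism property of the $\sigma$-symbol, $\ker\sigma=\nu\N$, the bracket identity $\sigma\bigl(\tfrac1\nu[A,B]\bigr)=\{\sigma(A),\sigma(B)\}_{\T}$, naturality of $L$, $R$, $\tilde L$, $\tilde R$, and Lemma~\ref{L:equal}; your substitution $\tilde L_f=L_f-\nu M_f$ is just a repackaging of the paper's telescoping decompositions of $L_fL_g-\tilde L_f\tilde L_g$ and of the commutators, and your treatment of the third axiom via $[L_f,R_g]=[\tilde L_f,\tilde R_g]=0$ matches the paper's calculation.
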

\begin{proof}
First we will check the axioms (\ref{E:unit}) for the mappings $S_1$ and $T_1$.  Given a natural operator $A = A_0 + \nu A_1 + \ldots \in \N$, the operator $A_0$ is the pointwise multiplication operator by a function. We see that $E \sigma(A) = A_0$ is the coefficient at the zeroth degree of $\nu$ of the formal function $A1$. For
\[
     A = \frac{1}{\nu}(L_f - \tilde L_f),
\]
where $f \in C^\infty(M)$, we have
\[
    A1 = \frac{1}{\nu} (f \ast 1 - f \tilde \ast 1) =0,
\]
 whence $E(S_1f) = E(\sigma(A)) =0$. Similarly, $E(T_1 f) =0$.

Then we will check the first axiom in (\ref{E:product}).  For $f,g \in C^\infty(M)$ we have
\begin{align}\label{E:prelim1}
   \sigma\left( \frac{1}{\nu}(L_{f\ast g} - \tilde L_{f\tilde\ast g})\right) = \sigma\left( \frac{1}{\nu}(L_f L_g - \tilde L_f \tilde L_g)\right) =\nonumber  \\
\sigma\left( \frac{1}{\nu}(L_f L_g - L_f \tilde L_g) + \frac{1}{\nu}(L_f \tilde L_g - \tilde L_f \tilde L_g)\right) = \\
\sigma\left( L_f \left(\frac{1}{\nu}(L_g - \tilde L_g)\right)\right) + \sigma\left(\frac{1}{\nu}(L_f - \tilde L_f) \tilde L_g \right) = \nonumber\\
S_0 f \, S_1 g + S_1 f \, S_0 g. \nonumber
\end{align}
Next,
\begin{align}\label{E:prelim2}
\sigma\left(\frac{1}{\nu}(L_{f\ast g} - L_{fg})\right) = \sigma \left(L_{C_1(f,g)}\right) = 
S_0 (C_1(f,g)).
\end{align}
Similarly,
\begin{equation}\label{E:prelim3}
 \sigma\left(\frac{1}{\nu}(\tilde L_{f\tilde \ast g} - \tilde L_{fg})\right) = S_0 (\tilde C_1(f,g)).
\end{equation}
Using equations (\ref{E:prelim1} - \ref{E:prelim3}) and the fact that  $C_1 = \tilde C_1$ proved in Lemma \ref{L:equal}, we obtain the first axiom in (\ref{E:product}):
\begin{align*}
 & S_1(fg)  = \sigma\left(\frac{1}{\nu}(L_{fg} - \tilde L_{fg}) \right) =\\
 & \sigma\left( \frac{1}{\nu}(L_{f\ast g} - \tilde L_{f\tilde\ast g})\right) = 
 S_0 f \, S_1 g + S_1 f \, S_0 g.
\end{align*}
The second axiom can be proved along the same lines.

To prove the first axiom in (\ref{E:bracket}) we observe that 
\[
     [f,g]_\ast := f \ast g - g \ast f = \nu \{f,g\}_0 + \nu^2 D_2(f,g) \pmod{\nu^3}.
\]
It follows that
\begin{equation}\label{E:ver1}
  \sigma\left(L_{\frac{1}{\nu^2} [f,g]_\ast} - L_{\frac{1}{\nu}\{f,g\}_0}\right) = \sigma\left (L_{D_2(f,g)}\right) = S_0 D_2(f,g).
\end{equation}
Similarly,
\begin{equation}\label{E:ver2}
  \sigma\left(\tilde L_{\frac{1}{\nu^2} [f,g]_{\tilde\ast}} - \tilde L_{\frac{1}{\nu}\{f,g\}_0}\right) = S_0 \tilde D_2(f,g).
\end{equation}
We have
\begin{align}\label{E:ver3}
 \sigma\left(L_{\frac{1}{\nu^2} [f,g]_\ast} - \tilde L_{\frac{1}{\nu^2} [f,g]_{\tilde\ast}}\right) = 
\sigma\left(\frac{1}{\nu^2}[L_f,L_g] - \frac{1}{\nu^2}[\tilde L_f,\tilde L_g]\right) =\nonumber\\
\sigma\left(\frac{1}{\nu^2}([L_f,L_g] - [L_f, \tilde L_g]) +  \frac{1}{\nu^2}([L_f, \tilde L_g] - [\tilde L_f,\tilde L_g])\right) =\\
\sigma\left(\frac{1}{\nu}\left[ L_f, \frac{1}{\nu}(L_g - \tilde L_g)\right]\right) + \sigma\left(\frac{1}{\nu}\left[\frac{1}{\nu}(L_f - \tilde L_f), \tilde L_g\right]\right) = \nonumber\\
\{S_0f, S_1g\}_{\T} + \{S_1f, S_0g\}_{\T}. \nonumber
\end{align}
Combining (\ref{E:ver1} - \ref{E:ver3}) and observing that $D_2(f,g) - \tilde D_2(f,g) = 
\{f,g\}_1$ we obtain the proof of the first axiom in (\ref{E:bracket}):
\begin{align*}
 S_1 & \{f,g\}_0  =  \sigma\left(\frac{1}{\nu}\left(L_{\{f,g\}_0}   - \tilde L_{\{f,g\}_0}\right)\right) =\\
 \sigma & \left(L_{\frac{1}{\nu^2} [f,g]_\ast}  - \tilde L_{\frac{1}{\nu^2} [f,g]_{\tilde\ast}}\right) - S_0  D_2(f,g) + \tilde S_0 D_2(f,g) = \\
& \{S_0f, S_1g\}_{\T} + \{S_1f, S_0g\}_{\T} - S_0 \{f,g\}_1.
\end{align*}
Similarly one can check the second axiom in (\ref{E:bracket}). 

The following calculation  verifies the last axiom in (\ref{E:bracket}):
\begin{align*}
  \{S_0f, T_1g\}_{\T} = \sigma\left( \frac{1}{\nu}\left[ L_f, \frac{1}{\nu}(R_g - \tilde R_g)\right] \right) = - \sigma\left(\frac{1}{\nu^2}[L_f, \tilde R_g] \right) = \\
- \sigma\left( \frac{1}{\nu}\left[ \frac{1}{\nu}(L_f - \tilde L_f), \tilde R_g\right]\right) = -  \{S_1f, T_0g\}_{\T}.
\end{align*}
\end{proof}
\section{Deformation formal symplectic groupoid with separation of variables}

A K\"ahler-Poisson manifold is a complex manifold $M$ with a Poisson tensor of type $(1,1)$ with respect to the complex structure. In local holomorphic coordinates it is written as  $g^{lk}$ where the indices $k$ and $l$  are holomorphic and antiholomorphic, respectively.
The Jacobi identity for $g^{lk}$ takes the form
\begin{equation}\label{E:jac}
      g^{lk} \frac{\p g^{qp}}{\p z^k} = g^{qk} \frac{\p g^{lp}}{\p z^k} \mbox{ and } 
g^{lk} \frac{\p g^{qp}}{\p \bar z^l} = g^{lp} \frac{\p g^{qk}}{\p \bar z^l},
\end{equation}
where we assume summation over repeated indices. If the K\"ahler-Poisson tensor $g^{lk}$ is nondegenerate on $M$, then its inverse $g_{kl}$ is the metric tensor of a global pseudo-K\"ahler structure on $M$.

A star product $\ast$  on $M$ given by bidifferential operators $\{C_r\}$ is ``with separation of variables" if each operator $C_r$ differentiates its first argument only in antiholomorphic directions while the second argument only in holomorphic ones.  In particular,
\[
                     C_1(u,v) = g^{lk}\frac{\p u}{\p \bar z^l}\frac{\p v}{\p z^k}.
\]
If $a$ and $b$ are a local holomorphic and a local antiholomorphic function, respectively, then $a \ast f = af$ and $f \ast b = bf$ for any $f$. Otherwise speaking, the operators
\begin{equation}\label{E:laarbb}
               L_a = a \mbox{ and } R_b = b
\end{equation}
are pointwise multiplication operators. The star products with separation of variables on a pseudo-K\"ahler manifold are parameterized by the formal deformations of the pseudo-K\"ahler form (see \cite{CMP1}).  Assume that $M$ is a pseudo-K\"ahler manifold endowed with a pseudo-K\"ahler form $\omega_{-1}$. A formal deformation of the form $\omega_{-1}$ is a formal form
\[
    \omega = \frac{1}{\nu}\omega_{-1} + \omega_0 + \nu \omega_1 + \ldots
\]
such that $\omega_r$ are closed forms of type (1,1) with respect to the complex structure. The star product with separation of variables on $M$ parametrized by the form $\omega$ has the following property which completely characterizes it. Let $U$ be any contractible coordinate chart with complex cordinates $\{z^k, \bar z^l\}$. Then each form $\omega_r$ has a potential $\Phi_r$ on $U$, so that $\omega_r = i\p\bar \p \Phi_r$. Set
\[
      \Phi = \frac{1}{\nu}\Phi_{-1} + \Phi_0 + \nu \Phi_1 + \ldots.
\]
Then
\begin{equation}\label{E:sepvar}
     L_{\nu\frac{\p \Phi}{\p z^k}} = \nu\left(\frac{\p \Phi}{\p z^k} + \frac{\p}{\p z^k}\right) \mbox{ and } L_{\nu\frac{\p \Phi}{\p \bar z^l}} = \nu\left(\frac{\p \Phi}{\p \bar z^l} + \frac{\p}{\p \bar z^l}\right).
\end{equation}

Only a few examples of star products with separation of variables on general K\"ahler-Poisson manifolds are known (see, e.g., \cite{E}, \cite{LTW} ,\cite{CM}). 

It was proved in \cite{CMP3} that all star products with separation of variables on K\"ahler-Poisson manifolds are natural. Formulas (\ref{E:laarbb}) imply that the formal symplectic groupoid of a star product with separation of variables has the following property of separation of variables: for a local holomorphic function $a$ and antiholomorphic function $b$
\[
        Sa = a \mbox{ and } Tb = b.
\]
As shown in \cite{CMP3}, for each K\"ahler-Poisson manifold $(M,g^{lk})$ there exists a unique formal symplectic groupoid with separation of variables $G$ over $M$. There are simple formulas for its source and target mappings. We will use complex coordinates $\{z^k,\bar z^l\}$ on a neighborhood $U \subset M$ and the dual fiber coordinates 
$\{\zeta_k,\bar \zeta_l\}$ on $T^\ast U$. We introduce the operators
\[
     D^k = g^{lk} \frac{\p}{\p \bar z^l} \mbox{ and }  \bar D^l = g^{lk} \frac {\p}{\p z^k}.
\]
The Jacobi identity (\ref{E:jac}) is equivalent to the condition that $[D^k,D^p]=0$ and $[\bar D^l,\bar D^q] = 0$ for any $k,l,p,q$. Set
\[
    {\mathfrak D} = \zeta_k D^k \mbox{ and }  \bar{\mathfrak D} = \bar \zeta_l \bar D^l.
\]
The source and target mappings of the formal groupoid $G$ are given by the following local formulas,
\begin{equation}\label{E:sourcetarget}
      Sf = e^{\mathfrak D} f  \mbox{ and }  Tf = e^{\bar{\mathfrak D}} f
\end{equation}
for $f \in C^\infty(U)$. The exponentials in (\ref{E:sourcetarget}) are defined as formal series in the fiber variables $\{\zeta_k,\bar \zeta_l\}$.

An infinitesimal deformation $g^{lk} + \e h^{lk}$ of a K\"ahler-Poisson tensor $g^{lk}$ 
is given by a tensor $h^{lk}$ satisfying the equations
\begin{align}\label{E:jach}
     & g^{lk} \frac{\p h^{qp}}{\p z^k} + h^{lk} \frac{\p g^{qp}}{\p z^k}= g^{qk} \frac{\p h^{lp}}{\p z^k}  + h^{qk} \frac{\p g^{lp}}{\p z^k} \nonumber\\
\mbox{ and } & \\
& g^{lk} \frac{\p h^{qp}}{\p \bar z^l} + h^{lk} \frac{\p g^{qp}}{\p \bar z^l}= g^{lp} \frac{\p h^{qk}}{\p \bar z^l} + h^{lp} \frac{\p g^{qk}}{\p \bar z^l}. \nonumber
\end{align}
The following proposition can be proved along the same lines as Theorem 5 in \cite{CMP3}.
\begin{proposition}
 Given an infinitesimal deformation $g^{lk} + \e h^{lk}$ of a K\"ahler-Poisson tensor $g^{lk}$ on $M$, there exists a unique deformation formal symplectic groupoid with separation of variables over the manifold $(M, g^{lk} + \e h^{lk})$. Its source and target mappings are given by the formulas
\begin{equation}\label{E:sandt}
      Sf = e^{{\mathfrak D} + \e {\mathfrak E}} f  \mbox{ and }  Tf = 
e^{\bar{\mathfrak D} + \e \bar{\mathfrak E}} f,
\end{equation}
where
\[
     {\mathfrak E} = \zeta_k h^{lk} \frac{\p}{\p \bar z^l} \mbox{ and }  \bar{\mathfrak E} = \bar \zeta_l h^{lk} \frac {\p}{\p z^k}.
\]
\end{proposition}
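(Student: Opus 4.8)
The natural strategy is to transcribe the proof of Theorem~5 of \cite{CMP3} over the ring $\C[\e]$, the one new point being that the deformed Jacobi equations (\ref{E:jach}) provide exactly the integrability one needs. I would first introduce on $C^\infty(U)[\e]$ the $\e$-deformed operators
\[
   D^k_\e := (g^{lk} + \e h^{lk})\frac{\p}{\p\bar z^l} = D^k + \e E^k \ \mbox{ and } \ \bar D^l_\e := (g^{lk} + \e h^{lk})\frac{\p}{\p z^k} = \bar D^l + \e\bar E^l,
\]
where $E^k = h^{lk}\,\p/\p\bar z^l$ and $\bar E^l = h^{lk}\,\p/\p z^k$, so that ${\mathfrak D}+\e{\mathfrak E} = \zeta_k D^k_\e$ and $\bar{\mathfrak D}+\e\bar{\mathfrak E} = \bar\zeta_l\bar D^l_\e$. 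The first step is the short computation -- the $\e$-analogue of the one turning (\ref{E:jac}) into $[D^k,D^p]=[\bar D^l,\bar D^q]=0$ -- showing that (\ref{E:jach}) is equivalent to $[D^k_\e,D^p_\e]=0$ and $[\bar D^l_\e,\bar D^q_\e]=0$ as operators on $C^\infty(U)[\e]$; equivalently, $g^{lk}+\e h^{lk}$ satisfies the Jacobi identity over the enlarged scalars $C^\infty(M)[\e]$.

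Given this, the argument of Theorem~5 of \cite{CMP3} goes through with $\C$ replaced by $\C[\e]$ and $g^{lk}$ by $g^{lk}+\e h^{lk}$. The operator ${\mathfrak D}+\e{\mathfrak E}$ is a first-order differential operator in the base variables with coefficients polynomial in the fibre variables, raising the $\zeta$-degree by one and vanishing on the zero section $Z$, so $e^{{\mathfrak D}+\e{\mathfrak E}}$ is a well-defined $\e$-linear operator on $C^\infty(\T,Z)[\e]$. From this I would read off the axioms for $S=S_0+\e S_1$, $T=T_0+\e T_1$: since ${\mathfrak D}+\e{\mathfrak E}$ is a derivation, $e^{{\mathfrak D}+\e{\mathfrak E}}$ is an algebra automorphism, hence $S(fg)=Sf\,Sg$, and the coefficient of $\e$ is precisely (\ref{E:product}) for $S_1$ (likewise for $T_1$); since ${\mathfrak D}+\e{\mathfrak E}$ is $O(\zeta)$ we get $E\circ e^{{\mathfrak D}+\e{\mathfrak E}}=\mathrm{id}$, so $E(Sf)=f$ and therefore $E(S_1f)=0$, i.e. (\ref{E:unit}); the commutativity $[D^k_\e,D^p_\e]=0$ is exactly what is used in \cite{CMP3} to prove that $S$ is a morphism of Poisson algebras, here for the deformed bracket $\{\cdot,\cdot\}_0+\e\{\cdot,\cdot\}_1$ with $\{\cdot,\cdot\}_1$ the bracket of the Poisson $2$-cocycle $h^{lk}$, which yields the first equation of (\ref{E:bracket}); the relation $[\bar D^l_\e,\bar D^q_\e]=0$ gives the anti-Poisson statement for $T$, and $\{Sf,Tg\}_{\S}=0$ follows as in \cite{CMP3} from the facts that $Sf$ is independent of the $\bar\zeta$-variables, $Tg$ is independent of the $\zeta$-variables, and the special form of these operators. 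Finally $D^k_\e a=0$ for a local holomorphic $a$ and $\bar D^l_\e b=0$ for a local antiholomorphic $b$, so $Sa=a$ and $Tb=b$: the separation of variables property. As in \cite{CMP3} these local formulas are independent of the choice of chart and glue to a global object, which by the reconstruction statement of Section~2 defines a deformation formal symplectic groupoid with separation of variables over $(M,g^{lk}+\e h^{lk})$.

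For uniqueness, let $S=S_0+\e S_1$, $T=T_0+\e T_1$ be the source and target mappings of any deformation groupoid with separation of variables over $(M,g^{lk}+\e h^{lk})$. The zero components satisfy the axioms of the unique formal symplectic groupoid with separation of variables over $(M,g^{lk})$, so $S_0f=e^{\mathfrak D}f$ and $T_0f=e^{\bar{\mathfrak D}}f$ as in (\ref{E:sourcetarget}). For the first-order components I would again follow \cite{CMP3}: being an algebra morphism, $S$ is determined by $Sz^k$ and $S\bar z^l$; separation of variables forces $Sz^k=z^k$; and the Poisson-morphism equation in (\ref{E:bracket}) applied to the pair $(z^k,\bar z^l)$ becomes an equation for $S\bar z^l$ which is triangular in the fibre degree and, together with the normalization $E(S\bar z^l)=\bar z^l$, determines $S\bar z^l$ uniquely -- the unique solution being the one given by (\ref{E:sandt}); the target mapping is then pinned down as well. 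I expect the only genuinely non-formal step to be the verification that $S$ (resp.\ $T$) intertwines the deformed Poisson bracket on $M$ with $\{\cdot,\cdot\}_{\S}$ (resp.\ its negative); everything else is bookkeeping with the $\e$-components or a repetition of \cite{CMP3} over the enlarged scalar ring. The conceptual content is simply that the deformed Jacobi identity (\ref{E:jach}) is the commutativity $[D^k_\e,D^p_\e]=0=[\bar D^l_\e,\bar D^q_\e]$, which is exactly what allows the exponential $e^{{\mathfrak D}+\e{\mathfrak E}}$ to ``integrate'' the deformed Poisson structure $g^{lk}+\e h^{lk}$.
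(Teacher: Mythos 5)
Your proposal is correct and follows essentially the same route as the paper, which itself only remarks that the proposition "can be proved along the same lines as Theorem 5 in \cite{CMP3}": you transcribe that argument over $\C[\e]$, with the key observation that the deformed Jacobi equations (\ref{E:jach}) amount to the commutativity $[D^k_\e,D^p_\e]=0=[\bar D^l_\e,\bar D^q_\e]$ of the $\e$-deformed operators, so both the verification of the source/target axioms for $e^{{\mathfrak D}+\e{\mathfrak E}}$, $e^{\bar{\mathfrak D}+\e\bar{\mathfrak E}}$ and the uniqueness argument carry over verbatim from the undeformed case.
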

Writing the source and target mappings (\ref{E:sandt}) in components, $S = S_0 + \e S_1$ and $T = T_0 + \e T_1$, we see that  $S_0$ and $T_0$ are the source and target mappings of the formal symplectic groupoid with separation of variables over $(M,g^{lk})$ given by the same formulas as in (\ref{E:sourcetarget}).
\begin{lemma} The components $S_1$ and $T_1$ of the source and target mappings (\ref{E:sandt}) are given by the following formulas:
\begin{align}\label{E:soneandtone}
     & S_1f = \left(\frac{e^{\ad ({\mathfrak D})} - 1}{ \ad ({\mathfrak D})} 
\left({\mathfrak E}\right) \right)S_0 f,  \nonumber\\
\mbox{ and } &\\
& T_1f = \left(\frac{e^{\ad (\bar {\mathfrak D})} - 1}{ \ad (\bar{\mathfrak D})} \left(\bar{\mathfrak E}\right)\right)T_0 f. \nonumber
\end{align}
\end{lemma}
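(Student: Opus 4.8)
The plan is to differentiate the exponential formulas (\ref{E:sandt}) with respect to the nilpotent parameter $\e$. Since $\e^2 = 0$, the operator $S = e^{{\mathfrak D} + \e {\mathfrak E}}$ is simply the ``first-order Taylor expansion'' of the exponential of a perturbed element, so what I need is the standard formula for the derivative of the exponential map in an associative algebra (here, the algebra of formal differential operators on $U$ with values in formal series in the fiber variables). Explicitly, for a one-parameter family $A + \e B$ one has $e^{A + \e B} = e^A + \e \left( \int_0^1 e^{sA} B\, e^{(1-s)A}\, ds \right)$, and rewriting the integral in terms of the adjoint action gives $e^{A+\e B} = e^A + \e \left( \frac{e^{\ad A} - 1}{\ad A}(B) \right) e^A$. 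I would either quote this identity or give the short derivation: expand $e^{A+\e B} = \sum_n \frac{1}{n!}(A + \e B)^n$, collect the terms linear in $\e$, which gives $\sum_n \frac{1}{n!}\sum_{j=0}^{n-1} A^j B A^{n-1-j}$, and then recognize $\sum_{j} A^j B A^{n-1-j} = \left( \sum_{j} \binom{n-1}{j}^{-1}\cdots \right)$ — more cleanly, push all $A$'s to the right using $A^j B = \sum_{i} \binom{j}{i} (\ad A)^i(B) A^{j-i}$ and resum to obtain $\left( \frac{e^{\ad A}-1}{\ad A}(B)\right) e^A$.

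With this identity in hand the lemma is immediate: take $A = {\mathfrak D}$ and $B = {\mathfrak E}$, so that the $\e$-linear part of $Sf = e^{{\mathfrak D} + \e{\mathfrak E}}f$ is exactly $\left( \frac{e^{\ad({\mathfrak D})} - 1}{\ad({\mathfrak D})}({\mathfrak E}) \right) e^{\mathfrak D} f = \left( \frac{e^{\ad({\mathfrak D})} - 1}{\ad({\mathfrak D})}({\mathfrak E}) \right) S_0 f$, using that $S_0 f = e^{\mathfrak D} f$ from (\ref{E:sourcetarget}). The computation for $T_1$ is verbatim the same with ${\mathfrak D}, {\mathfrak E}$ replaced by $\bar{\mathfrak D}, \bar{\mathfrak E}$.

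There are two small points of rigor I would want to address. First, all the series involved are well defined: $\ad({\mathfrak D})$ raises the fiber-polynomial degree by one (since ${\mathfrak D} = \zeta_k D^k$ is linear in $\zeta$ and $D^k$ is a vector field), hence $\frac{e^{\ad({\mathfrak D})}-1}{\ad({\mathfrak D})} = \sum_{m \geq 0} \frac{1}{(m+1)!}\ad({\mathfrak D})^m$ applied to ${\mathfrak E}$ converges in the fiber-adic topology on $C^\infty(T^\ast U, Z)$, and similarly $e^{\mathfrak D}$ is the same formal series in the fiber variables already used in (\ref{E:sourcetarget}). Second, I should note that the derivative-of-exponential identity is purely formal/algebraic and so holds in this completed operator algebra; no analytic convergence is needed because everything is graded by fiber degree and each graded piece is a finite sum.

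I do not expect a serious obstacle here — the lemma is essentially a repackaging of (\ref{E:sandt}). If anything, the only mildly delicate step is making the derivation of the $\frac{e^{\ad A}-1}{\ad A}$ formula clean rather than a messy index manipulation; I would sidestep that by invoking it as the well-known formula $\left.\frac{d}{dt}\right|_{t=0} e^{A + tB} = \left(\frac{e^{\ad A}-1}{\ad A}(B)\right) e^A$ (Duhamel / derivative of the exponential map) and simply specialize $t = \e$ with $\e^2 = 0$, which turns the derivative into the exact $\e$-linear coefficient.
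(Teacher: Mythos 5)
Your proposal is correct and rests on exactly the same identity the paper uses: the $\e$-linear part of $e^{{\mathfrak D}+\e{\mathfrak E}}$ equals $\bigl(\frac{e^{\ad({\mathfrak D})}-1}{\ad({\mathfrak D})}({\mathfrak E})\bigr)e^{\mathfrak D}$, with all series convergent in the fiber-adic topology. The only (cosmetic) difference is how that identity is justified: the paper introduces $u(t)=\exp\bigl(t({\mathfrak D}+\e{\mathfrak E})\bigr)$ and checks that the candidate expression solves the defining ODE for the component $u_1(t)$ with $u_1(0)=0$, whereas you quote or rederive the Duhamel/derivative-of-the-exponential formula directly by series resummation — both are complete arguments.
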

\begin{proof} The right-hand sides of the formulas in (\ref{E:soneandtone}) are defined as formal series in the fiber variables $\{\zeta_k,\bar \zeta_l\}$. Consider the operator
\[
    u(t) = \exp\left(t({\mathfrak D} + \e {\mathfrak E})\right) = u_0(t) + \e u_1(t),
\]
where $u_0(t) = \exp(t {\mathfrak D})$. The component $u_1(t)$ satisfies the initial condition $u_1(0) = 0$ and the equation
\[
     \frac{du_1}{dt} = u_1 {\mathfrak D} + u_0 {\mathfrak E}.
\]
It can be checked directly that these conditions are satisfied by the operator
\[
  \left(\frac{e^{t\ad ({\mathfrak D})} - 1}{ \ad ({\mathfrak D})} \left({\mathfrak E}\right) \right)e^{t\mathfrak D}
\]
which, thus, equals $u_1(t)$. Setting $t=1$ we obtain the first formula in 
(\ref{E:soneandtone}). The second formula is proved similarly.
\end{proof}
Given the mappings $S_0,S_1$, one can recover the tensors $g^{lk}$ and $h^{lk}$ from the following formulas:
\begin{equation}\label{E:recov}
    E\left(\frac{\p}{\p\zeta_k}S_0 f\right) = g^{lk}\frac{\p f}{\p \bar z^l}, \ E\left(\frac{\p}{\p\zeta_k}S_1 f\right) = h^{lk}\frac{\p f}{\p \bar z^l}.
\end{equation}

Assume that $\varkappa$ is a closed (1,1)-form on $(M, g^{lk})$. It determines an infinitesimal deformation of the tensor $g^{lk}$ as follows. Let $\psi$ be a local potential of $\varkappa$, so that $\varkappa = i\p \bar \p \psi$. Set
\begin{equation}\label{E:defh}
    h^{lk} = - g^{lp} \frac{\p^2 \psi}{\p z^p \p \bar z^q} g^{qk}.
\end{equation}
One can check that $h^{lk}$ satisfies (\ref{E:jach}). On a pseudo-K\"ahler manifold, {\it every} deformation of $g^{kl}$ comes from such a closed (1,1)-form $\varkappa$. It turns out that for the infinitesimal deformation $g^{lk} + \e h^{lk}$ given by (\ref{E:defh}) formulas (\ref{E:soneandtone}) can be substantially simplified. 
\begin{lemma}
If $h^{lk}$ is given by formula (\ref{E:defh}) for some local function $\psi$, then 
\begin{eqnarray}\label{E:simpone}
S_1 f = \left(\frac{\p\psi}{\p z^p} - S_0 \frac{\p\psi}{\p z^p} \right) D^p S_0 f \mbox{ and } \nonumber 
\\
T_1 f = \left(\frac{\p\psi}{\p \bar z^q} - T_0 \frac{\p\psi}{\p \bar z^q} \right) \bar D^q T_0 f.
\end{eqnarray}
\end{lemma}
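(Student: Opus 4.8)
The plan is to derive the identities \eqref{E:simpone} from the closed forms \eqref{E:soneandtone} of the preceding lemma by checking that, for the special tensor \eqref{E:defh}, the operator $\frac{e^{\ad({\mathfrak D})}-1}{\ad({\mathfrak D})}({\mathfrak E})$ reduces to multiplication by $\frac{\p\psi}{\p z^p}-S_0\frac{\p\psi}{\p z^p}$ composed with $D^p$, and symmetrically on the target side. Since both \eqref{E:soneandtone} and \eqref{E:simpone} express $S_1 f$ (resp. $T_1 f$) as an operator applied to $S_0 f$ (resp. $T_0 f$), it is enough to establish this as an identity of operators.

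First I would repackage ${\mathfrak E}$. Writing $\psi_p:=\p\psi/\p z^p$ and substituting \eqref{E:defh} into the definition of ${\mathfrak E}$, one groups the metric factors to obtain
\[
{\mathfrak E}=\zeta_k h^{lk}\frac{\p}{\p\bar z^l}=-\left(\zeta_k g^{qk}\right)\frac{\p^2\psi}{\p z^p\,\p\bar z^q}\left(g^{lp}\frac{\p}{\p\bar z^l}\right)=-\left(\zeta_k g^{qk}\right)\frac{\p\psi_p}{\p\bar z^q}\,D^p.
\]
The key observation is that $\zeta_k g^{qk}={\mathfrak D}(\bar z^q)$, hence $\left(\zeta_k g^{qk}\right)\p\psi_p/\p\bar z^q={\mathfrak D}(\psi_p)$, so that
\[
{\mathfrak E}=-({\mathfrak D}\psi_p)\,D^p,
\]
a sum over $p$ of the multiplication operators by the functions ${\mathfrak D}\psi_p$ followed by the operators $D^p$.

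Next I would use that $D^p$ commutes with ${\mathfrak D}$: since $D^p$ involves no fibre derivative, $[D^p,{\mathfrak D}]=\zeta_k[D^p,D^k]=0$, the bracket vanishing by the Jacobi identity \eqref{E:jac}. Hence $\ad({\mathfrak D})(g\,D^p)=({\mathfrak D}g)\,D^p$ for every function $g$, and by induction $(\ad{\mathfrak D})^n(g\,D^p)=({\mathfrak D}^n g)\,D^p$. Applying the series $\frac{e^x-1}{x}=\sum_{n\ge 0}x^n/(n+1)!$ to the coefficient of $D^p$ in ${\mathfrak E}=-({\mathfrak D}\psi_p)D^p$ and absorbing the extra factor of ${\mathfrak D}$,
\[
\frac{e^{\ad({\mathfrak D})}-1}{\ad({\mathfrak D})}({\mathfrak E})=-\left(\frac{e^{{\mathfrak D}}-1}{{\mathfrak D}}({\mathfrak D}\psi_p)\right)D^p=-\big((e^{{\mathfrak D}}-1)\psi_p\big)D^p=\left(\psi_p-S_0\psi_p\right)D^p,
\]
since $S_0\psi_p=e^{{\mathfrak D}}\psi_p$. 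Multiplying by $S_0 f$ yields the first line of \eqref{E:simpone}. The target case is verbatim the same with ${\mathfrak D},{\mathfrak E},D^p,z$ replaced by $\bar{\mathfrak D},\bar{\mathfrak E},\bar D^q,\bar z$, using $\bar\zeta_l g^{lp}=\bar{\mathfrak D}(z^p)$ and $[\bar D^q,\bar{\mathfrak D}]=0$.

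I expect the only real obstacle to be the repackaging ${\mathfrak E}=-({\mathfrak D}\psi_p)D^p$: the special form \eqref{E:defh} of $h^{lk}$ is precisely what makes the coefficient of $D^p$ an \emph{exact} ${\mathfrak D}$-derivative, and it is this extra ${\mathfrak D}$ that collapses the operator series $\frac{e^{\ad{\mathfrak D}}-1}{\ad{\mathfrak D}}$ down to $e^{{\mathfrak D}}-1=S_0-1$; the remaining steps are routine index bookkeeping. One small point to keep in mind is that the induction $(\ad{\mathfrak D})^n(g\,D^p)=({\mathfrak D}^n g)D^p$ is still valid for $g={\mathfrak D}\psi_p$, which is linear in the fibre variables, because $D^p$ also commutes with multiplication by $\zeta$, so only the antiholomorphic dependence of $g$ gets differentiated, consistently with ${\mathfrak D}g$ meaning $[{\mathfrak D},g]$.
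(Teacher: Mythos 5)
Your argument is correct. Where it differs from the paper is in its starting point: you take the closed formula $S_1 f = \bigl(\frac{e^{\ad(\mathfrak D)}-1}{\ad(\mathfrak D)}(\mathfrak E)\bigr)S_0 f$ from the preceding lemma and collapse the $\ad$-series, using that for $h^{lk}$ of the special form (\ref{E:defh}) one has $\mathfrak E=-(\mathfrak D\psi_p)D^p$ with $[\mathfrak D,D^p]=0$, so that $\ad(\mathfrak D)$ acts only on the coefficient and the Duhamel factor reduces to $e^{\mathfrak D}-1=S_0-1$. The paper instead gives a self-contained argument that never touches the series: it conjugates $e^{\mathfrak D}$ by $e^{\e\,\psi_p D^p}$, uses $\e^2=0$ to identify $e^{\mathfrak D+\e\mathfrak E}$ with $e^{\mathfrak D}+\e\,[\psi_p D^p,e^{\mathfrak D}]$ via the computation $[\psi_p D^p,\mathfrak D]=\mathfrak E$, and then reads off $S_1 f=[\psi_p D^p,e^{\mathfrak D}]f$ and simplifies using that $D^p$ commutes with $e^{\mathfrak D}$. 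The two proofs rest on exactly the same two facts ($[D^k,D^p]=0$ and the $\ad(\mathfrak D)$-exactness of $\mathfrak E$, equivalently $\mathfrak E=[\psi_p D^p,\mathfrak D]=-(\mathfrak D\psi_p)D^p$), but yours presents the result as a direct corollary of (\ref{E:soneandtone}), while the paper's conjugation trick bypasses the series manipulation entirely. Your closing remarks (that $D^p$ commutes with multiplication by the fibre variables, so the induction $(\ad\mathfrak D)^n(g\,D^p)=(\mathfrak D^n g)D^p$ applies to $g=\mathfrak D\psi_p$, and that all series are legitimate as formal series in the fibre variables because $\mathfrak D$ raises the fibre degree) correctly dispose of the only delicate points; only the phrase ``multiplying by $S_0f$'' should read ``applying to $S_0f$,'' since the bracketed expression is a first-order operator, which is clearly what you intend.
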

\begin{proof}
Observe that
\begin{equation}\label{E:expz}
    e^{\e \frac{\p\psi}{\p z^p} D^p} \left(e^{\mathfrak D}\right) e^{-\e \frac{\p\psi}{\p z^p} D^p} =
\exp\left( e^{\e \frac{\p\psi}{\p z^p} D^p} {\mathfrak D} e^{-\e \frac{\p\psi}{\p z^p} D^p} \right).
\end{equation}
Taking into account that $\e^2 = 0$, we rewrite (\ref{E:expz}) as follows:
\begin{equation}\label{E:explong}
   e^{\mathfrak D} + \e \left[ \frac{\p\psi}{\p z^p} D^p, e^{\mathfrak D} \right] = \exp \left({\mathfrak D} + \e \left[ \frac{\p\psi}{\p z^p} D^p,{\mathfrak D} \right] \right).
\end{equation}
Since $D^k$ and $D^p$ commute, we get that
\begin{equation}\label{E:explain}
     \left[ \frac{\p\psi}{\p z^p} D^p,{\mathfrak D} \right] = -\zeta_k g^{lp}\frac{\p^2 \psi}{\p z^p \p \bar z^q} g^{qk} \frac{\p}{\p \bar z^l} = \zeta_k h^{lk}\frac{\p}{\p \bar z^l} = {\mathfrak E}.
\end{equation}
Applying both sides of (\ref{E:explong}) to a function $f$ and taking (\ref{E:explain}) into account, we obtain that
\[
     S_0 f + \e \left[ \frac{\p\psi}{\p z^p} D^p, e^{\mathfrak D} \right] f = Sf,
\] 
whence, using that $D^p$ commutes with $e^{\mathfrak D}$, we arrive at a simple formula for $S_1$:
\begin{align}
    S_1 f = \left[ \frac{\p\psi}{\p z^p} D^p, e^{\mathfrak D} \right] f  = \frac{\p\psi}{\p z^p} D^p S_0 f - S_0\left(\frac{\p\psi}{\p z^p} D^p f\right) =\nonumber  \\
\left(\frac{\p\psi}{\p z^p} - S_0 \frac{\p\psi}{\p z^p} \right) D^p S_0 f.
\end{align}
One obtains the formula for $T_1$ in (\ref{E:simpone}) from a similar calculation.
\end{proof}
Since for each K\"ahler-Poisson manifold $M$ there is a unique formal symplectic groupoid with separation of variables over $M$, any two star products with separation of variables on $M$ have the same formal symplectic groupoid and one can consider the corresponding deformation groupoid. 
\begin{lemma}
The deformation groupoid of a pair of star products with separation of variables $(\ast, \tilde\ast)$ on a K\"ahler-Poisson manifold $M$ has the property of separation of variables: for a local holomorphic function $a$ and a local antiholomorphic function $b$,
\[
    Sa = a \mbox{ and } Tb =b.
\]
\end{lemma}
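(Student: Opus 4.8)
The plan is to reduce the statement to the explicit formulas (\ref{E:szerotzero}) and (\ref{E:sonetonestar}) for the components of the source and target mappings of the deformation groupoid of $(\ast,\tilde\ast)$, combined with the description (\ref{E:laarbb}) of left and right star multiplication by holomorphic, respectively antiholomorphic, local functions. Since $M$ is a K\"ahler-Poisson manifold, both $\ast$ and $\tilde\ast$ are star products with separation of variables, so for a local holomorphic function $a$ the operators $L_a$ and $\tilde L_a$ are both the pointwise multiplication operator by $a$, and for a local antiholomorphic function $b$ the operators $R_b$ and $\tilde R_b$ are both the pointwise multiplication operator by $b$.

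First I would treat the degree-zero components. By (\ref{E:szerotzero}), $S_0a=\sigma(L_a)$, and since $L_a$ is the multiplication operator by $a$ it is a natural formal differential operator of order zero with no $\nu$-dependence, so $\sigma(L_a)=a$; this is exactly the separation-of-variables property of the formal symplectic groupoid $G$ already recorded before (\ref{E:sourcetarget}). Likewise $T_0b=\sigma(R_b)=b$. Next I would compute the degree-one components: by (\ref{E:sonetonestar}), $S_1a=\sigma\!\left(\frac{1}{\nu}(L_a-\tilde L_a)\right)$, and since $L_a-\tilde L_a=a-a=0$ as operators, we get $S_1a=\sigma(0)=0$; similarly $T_1b=\sigma\!\left(\frac{1}{\nu}(R_b-\tilde R_b)\right)=0$. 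Combining these, $Sa=S_0a+\e S_1a=a$ and $Tb=T_0b+\e T_1b=b$, which is the assertion.

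There is essentially no obstacle in this argument; the only point meriting a moment's attention is that the equality $L_a=\tilde L_a$ (and $R_b=\tilde R_b$) holds exactly as operators, not merely modulo the kernel of $\sigma$, so that the difference vanishes identically and $S_1a$, $T_1b$ are literally zero rather than just zero after applying $\sigma$. This is immediate from (\ref{E:laarbb}) applied to each of the two star products.
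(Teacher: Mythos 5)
Your proposal is correct and follows essentially the same route as the paper: both use that $L_a=\tilde L_a=a$ and $R_b=\tilde R_b=b$ (formula (\ref{E:laarbb}) for each star product) to conclude $S_1a=\sigma\bigl(\tfrac{1}{\nu}(L_a-\tilde L_a)\bigr)=0$ and $T_1b=0$, and combine this with the separation-of-variables property $S_0a=a$, $T_0b=b$ of the underlying groupoid. Nothing further is needed.
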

\begin{proof}
For a local holomorphic function $a$,
\[
     S_1a = \sigma\left(\frac{1}{\nu}(L_a - \tilde L_a)\right) =0,
\]
therefore, $Sa = S_0a + \e S_1 a = a$. Similarly, $Tb = b$ for a local antiholomorphic function $b$. 
\end{proof}
We will use formulas (\ref{E:simpone}) to calculate the deformation groupoid of a pair of star products with separation of variables on a pseudo-K\"ahler manifold $(M, \omega_{-1})$. 
\begin{proposition}
Let $\ast$ and $\tilde\ast$ be two star products with separation of variables on $M$ parametrized by the formal deformations 
\[
\omega = \frac{1}{\nu} \omega_{-1} + \omega_0 + \nu \omega_1 + \ldots \mbox{ and }
\tilde \omega = \frac{1}{\nu} \omega_{-1} + \tilde \omega_0 + \nu \tilde\omega_1 + \ldots
\]
of the pseudo-K\"ahler form $\omega_{-1}$. 
Then the corresponding deformation groupoid with separation of variables over $M$ corresponds to the deformation of the K\"ahler-Poisson structure by the closed (1,1)-form $\varkappa = \omega_0 - \tilde \omega_0$.
\end{proposition}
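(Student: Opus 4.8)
The plan is to pin down the deformation groupoid of the pair $(\ast,\tilde\ast)$ by computing its source component $S_1$ on a distinguished family of local functions and then reading off the deformation tensor. By the lemma above this deformation groupoid has the separation of variables property, and its zeroth components $S_0,T_0$ are the source and target mappings (\ref{E:sourcetarget}) of the unique formal symplectic groupoid $G$ with separation of variables over $(M,g^{lk})$. So the whole problem reduces to identifying the Poisson $2$-cocycle $\pi_1$ attached to the pair through $\{f,g\}_1=D_2(f,g)-\tilde D_2(f,g)$.

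First I would note that $\pi_1$ is of type $(1,1)$. Since both products separate variables, $C_2$ and $\tilde C_2$ differentiate their first argument only antiholomorphically and their second only holomorphically; hence $D_2-\tilde D_2$ is the sum of a term differentiating $f$ antiholomorphically and $g$ holomorphically and a term with the roles of $f,g$ exchanged, and the requirement that this skew-symmetric biderivation be first order in each slot forces it to have the form $h^{lk}\bigl(\partial_{\bar z^l}f\,\partial_{z^k}g-\partial_{z^k}f\,\partial_{\bar z^l}g\bigr)$ for a $(1,1)$-tensor $h^{lk}$. Thus the deformation groupoid of $(\ast,\tilde\ast)$ is a deformation formal symplectic groupoid with separation of variables over $(M,g^{lk}+\e h^{lk})$, and by the uniqueness of such a groupoid it coincides with (\ref{E:sandt}); in particular the recovery formulas (\ref{E:recov}) apply to it.

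The core computation is that of $S_1$ on the functions $\phi_k:=\partial\Phi_{-1}/\partial z^k$, for which I would use one common local potential $\Phi_{-1}$ of $\omega_{-1}$ for both products, so that $\Phi=\tfrac1\nu\Phi_{-1}+\Phi_0+\ldots$ and $\tilde\Phi=\tfrac1\nu\Phi_{-1}+\tilde\Phi_0+\ldots$ agree in the pole term. Subtracting the first identity of (\ref{E:sepvar}) for $\tilde\ast$ from that for $\ast$ makes the operator $\nu\,\partial/\partial z^k$ cancel and leaves the operator of pointwise multiplication by $\nu\,\partial(\Phi-\tilde\Phi)/\partial z^k=\nu\,\partial\psi/\partial z^k+O(\nu^2)$, where $\psi:=\Phi_0-\tilde\Phi_0$. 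Using that $f\mapsto L_f$ and $f\mapsto\tilde L_f$ are $\C[[\nu]]$-linear to expand $L_{\nu\partial\Phi/\partial z^k}=L_{\phi_k}+\nu L_{\partial\Phi_0/\partial z^k}+\ldots$ (and likewise for $\tilde\ast$) and dividing by $\nu$, one obtains
\[
\tfrac1\nu(L_{\phi_k}-\tilde L_{\phi_k})=\bigl[\text{mult.\ by }\partial(\Phi-\tilde\Phi)/\partial z^k\bigr]-\bigl(L_{\partial\Phi_0/\partial z^k}-\tilde L_{\partial\tilde\Phi_0/\partial z^k}\bigr)+(\text{a remainder in }\nu\N),
\]
the left-hand side lying in $\N$ because $\ast$ and $\tilde\ast$ share the groupoid $G$. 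Applying $\sigma$, a homomorphism with kernel $\nu\N$ sending $L_g$ and $\tilde L_g$ to $S_0 g$, gives
\[
S_1\phi_k=\frac{\partial\psi}{\partial z^k}-S_0\Bigl(\frac{\partial\psi}{\partial z^k}\Bigr).
\]

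It then remains to substitute $f=\phi_m$ in the second equation of (\ref{E:recov}); using $\partial\phi_m/\partial\bar z^l=\partial^2\Phi_{-1}/\partial z^m\partial\bar z^l=g_{ml}$ and $E\bigl(\partial_{\zeta_k}S_0 u\bigr)=g^{lk}\,\partial u/\partial\bar z^l$ one obtains $h^{lk}g_{ml}=-g^{lk}\,\partial^2\psi/\partial z^m\partial\bar z^l$, and contracting with the inverse metric yields precisely formula (\ref{E:defh}) for the potential $\psi$. Since $i\partial\bar\partial\psi=i\partial\bar\partial(\Phi_0-\tilde\Phi_0)=\omega_0-\tilde\omega_0=\varkappa$, this is the assertion. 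The step I expect to be the main obstacle is the displayed computation of $S_1\phi_k$: identity (\ref{E:sepvar}) entangles several powers of $\nu$, so although $L_{\phi_k}-\tilde L_{\phi_k}$ vanishes to order $\nu^2$ its $\sigma$-symbol is not read off from the lowest-order term, and one must carefully separate the parts of the expansion lying in $\nu\N$ (which $\sigma$ annihilates) from the surviving contribution $-S_0(\partial\psi/\partial z^k)$ coming from $L_{\partial\Phi_0/\partial z^k}-\tilde L_{\partial\tilde\Phi_0/\partial z^k}$; the $(1,1)$-type check and the index manipulation in the last step are routine.
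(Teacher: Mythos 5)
Your argument is correct, and its core is the same as the paper's: you test the pair $(\ast,\tilde\ast)$ on the functions $\p\Phi_{-1}/\p z^k$, use a common pole term $\Phi_{-1}$ in the potentials so that subtracting the two identities (\ref{E:sepvar}) cancels the derivative part, and conclude $S_1(\p\Phi_{-1}/\p z^k)=\p\psi/\p z^k-S_0(\p\psi/\p z^k)$ with $\psi=\Phi_0-\tilde\Phi_0$; this is exactly the paper's computation (\ref{E:ontheone}), up to an immaterial difference in how the operator difference is split before applying $\sigma$ (you subtract $L_{\p\Phi_0/\p z^k}-\tilde L_{\p\tilde\Phi_0/\p z^k}$ modulo $\nu\N$, the paper subtracts $\tilde L_{\p(\Phi-\tilde\Phi)/\p z^k}$). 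Where you genuinely deviate is the endgame: the paper invokes the fact that on a pseudo-K\"ahler manifold every infinitesimal deformation of $g^{lk}$ has a local potential, so $S_1$ must have the form (\ref{E:simpone}) for some $\psi$, and then matches the two expressions using (\ref{E:sobserv}) and nondegeneracy via (\ref{E:recov}); you instead bypass Lemma (\ref{E:simpone}) altogether by first noting that $\pi_1=D_2-\tilde D_2$ is of type $(1,1)$ (so the deformation groupoid of the pair is, by the uniqueness proposition, the one given by (\ref{E:sandt}) for some tensor $h^{lk}$) and then reading $h^{lk}$ off directly from the recovery formula (\ref{E:recov}) evaluated on $f=\p\Phi_{-1}/\p z^m$, which reproduces (\ref{E:defh}) with $\psi=\Phi_0-\tilde\Phi_0$, hence $\vk=i\p\bar\p\psi=\omega_0-\tilde\omega_0$. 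Your route is slightly more self-contained at this step (it does not need the statement that every deformation of $g^{lk}$ comes from a closed $(1,1)$-form, nor the explicit formula (\ref{E:simpone})), at the cost of the small extra observation about the $(1,1)$ type of $\pi_1$ and an index contraction; both arguments rest on the same uniqueness proposition and the nondegeneracy of $g^{lk}$ through (\ref{E:recov}), so the gain is one of economy rather than generality.
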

\begin{proof}
Consider local potentials 
\[
\Phi = \frac{1}{\nu} \Phi_{-1} + \Phi_0 + \nu \Phi_1 + \ldots \mbox{ and }
\tilde \Phi = \frac{1}{\nu} \Phi_{-1} + \tilde \Phi_0 + \nu \tilde\Phi_1 + \ldots
\]
of $\omega$ and $\tilde\omega$, respectively, on a contracible coordinate chart. Using ~(\ref{E:sepvar}), we get
\begin{align}\label{E:ontheone}
    S_1\left(\frac{\p\Phi_{-1}}{\p z^k}\right) = S_1\left(\nu\frac{\p\Phi}{\p z^k}\right) = \sigma\left(\frac{1}{\nu}\left(L_{\nu\frac{\p\Phi}{\p z^k}} - \tilde L_{\nu\frac{\p\Phi}{\p z^k}}  \right)\right) =\\
\sigma\left(\frac{1}{\nu}\left(L_{\nu\frac{\p\Phi}{\p z^k}} - \tilde L_{\nu\frac{\p\tilde\Phi}{\p z^k}}  \right) - \tilde L_{\frac{\p(\Phi-\tilde\Phi)}{\p z^k}} \right) = \frac{\p(\Phi_0 -\tilde\Phi_0)}{\p z^k} - S_0\frac{\p(\Phi_0 -\tilde\Phi_0)}{\p z^k}.\nonumber
\end{align}
Since
\[
    S_0 \frac{\p\Phi_{-1}}{\p z^k} = \frac{\p\Phi_{-1}}{\p z^k} + \zeta_k,
\]
we see that 
\begin{equation}\label{E:sobserv}
D^p S_0 \frac{\p\Phi_{-1}}{\p z^k} = \delta^p_k. 
\end{equation}
On the pseudo-K\"ahler manifold $M$ the mapping $S_1$ is given by the first formula in (\ref{E:simpone}) for some function $\psi$. Now, we get from (\ref{E:simpone}) and (\ref{E:sobserv}) that 
\begin{equation}\label{E:ontheother}
    S_1\left(\frac{\p\Phi_{-1}}{\p z^k}\right) = \frac{\p\psi}{\p z^k} - S_0 \frac{\p\psi}{\p z^k}.
\end{equation}
If we set $\varkappa = \omega_0 - \tilde \omega_0$ and take
\[
    \psi = \Phi_0 - \tilde \Phi_0,
\]
formulas (\ref{E:ontheone}) and (\ref{E:ontheother}) will agree. Since the K\"ahler-Poisson tensor $g^{lk}$ is nondegenerate, it can be seen, say, from formulas (\ref{E:recov}) that this can only happen if the deformation groupoid of the pair $(\ast,\tilde\ast)$ corresponds to the infinitesimal deformation of the Kahler-Poisson structure on $M$ determined by the form $\varkappa = \omega_0 - \tilde \omega_0$.
\end{proof}

\section{An alternative construction of a formal symplectic groupoid with separation of variables}

In this section we will work with an alternative, coordinate free construction of a formal symplectic groupoid with separation of variables introduced in \cite{CMP3}. It is based upon a semiclassical counterpart of a formal Berezin transform. We will apply this construction to deformation groupoids with separation of variables.

The formal Berezin transform $B$ of a star product with separation of variables $\ast$ on a K\"ahler-Poisson manifold $(M, g^{lk})$ is a formal differential operator on $M$ uniquely determined by the property that for a local holomorphic function $a$ and an antiholomorphic function $b$
\[
                   B(ab) = b \ast a.
\]
In particular,
\[
       B = 1 + \nu g^{lk} \frac{\p^2}{\p z^k \p \bar z^l} + \ldots.
\]
A star product with separation of variables is completely determined by its formal Berezin transform. It was proved in \cite{CMP3} that if the formal Berezin transform $B$ is written in the exponential form,
\[
      B = \exp \left\{\frac{1}{\nu} X\right\},
\]
then $X = \nu^2 X_2 + \nu^3 X_3 + \ldots$ is a natural operator with
\[
      X_2 = g^{lk} \frac{\p^2}{\p z^k \p \bar z^l}.
\]
It was shown in \cite{CMP3} that its $\sigma$-symbol $\sigma (X)$ is expressed in terms of the formal symplectic groupoid with separation of variables on $M$ and is even in the fiber variables on $\T$, i.e., $\s_{2k+1} (X_{2k+1}) =0$. It means that the order of the operator $X_{2k+1}$ is not greater than $2k$. Thus, the operator
\[
     Y: = \nu^2 X_3 + \nu^4 X_5 + \ldots
\]
is natural. It was the main motivation of this paper to understand the formal semiclassical origin of the $\sigma$-symbol $\sigma(Y)$ and to find an algorithm to compute it from formal semiclassical data. 

Recall the alternative description of all formal symplectic groupoids with separation of variables on a complex manifold $M$ from \cite{CMP3}. We will use local complex coordinates $\{z^k,\bar z^l\}$ on $M$ and the dual fiber coordinates $\{\zeta_k,\bar \zeta_l\}$ on $\T$. Given a function $K \in C^\infty(\T,Z)$, denote by $H_K$ the corresponding Hamiltonian vector field, so that for any $Q \in C^\infty(\T,Z)$,
\[
                       H_K Q = \{K,Q\}_{\T}.
\]
Let $\K$ denote the set of all elements $K \in C^\infty(\T,Z)$ such that the expansion of $K$ into homogeneous components starts at least with the quadratic term in the fiber variables, $K = K_2 + K_3 + \ldots$, and such that for any local holomorphic functions $a(z), \tilde a(z)$ and antiholomorphic functions $b(\bar z), \tilde b(\bar z)$ the following equations hold:
\begin{equation}\label{E:ehk}
     \{e^{H_K} a,\tilde a\}_{\T} = 0 \mbox{ and }  \{e^{H_K} b,\tilde b\}_{\T} = 0.
\end{equation}
Since the function $K$ starts with at least a quadratic term in the fiber variables, the operator $H_K$ raises the degree of filtration by the ideal $\I$ generated by $\zeta_k, \bar \zeta_l$ at least by one. Therefore, $\exp H_K$ is given by a series covergent in the $\I$-adic topology.
Equations (\ref{E:ehk}) can be rewritten in the recursive form:
\begin{align}\label{E:recurs}
     \{\{K_n,a\}_{\T},\tilde a\}_{\T} = - \sum_{k=2}^{n-1} \frac{1}{k!} \sum_{i_1 + \ldots i_k = n+k -1} \{H_{K_{i_1}}\ldots H_{K_{i_k}} a,\tilde a\}_{\T}, \nonumber\\
 \{\{K_n,b\}_{\T},\tilde b\}_{\T} = - \sum_{k=2}^{n-1} \frac{1}{k!} \sum_{i_1 + \ldots i_k = n+k -1} \{H_{K_{i_1}}\ldots H_{K_{i_k}} b,\tilde b\}_{\T}. 
\end{align}
As it was shown in \cite{CMP3}, there is a bijection of the set of all  global K\"ahler-Poisson structures on $M$ onto the set $\K$ which relates to each K\"ahler-Poisson tensor $g^{lk}$ on $M$ a unique element $K \in \K$ with
\begin{equation}\label{E:init}
             K_2 = g^{lk} \zeta_k \bar \zeta_l.
\end{equation}
The uniqueness part can be easily seen from (\ref{E:recurs}). It was also shown in ~\cite{CMP3} that every function $K \in \K$ is even in the fiber variables, 
\[
 K = K_2 + K_4 + \ldots.
\]
The components $K_r$ of the function $K\in \K$ corresponding to the tensor $g^{lk}$ are polynomials in partial derivatives of $g^{lk}$ and the fiber variables $\{\zeta_k,\bar \zeta_l\}$. The source and target mappings of the unique formal symplectic groupoid with separation of variables on a K\"ahler-Poisson manifold $(M, g^{lk})$ are expressed in terms of the corresponding element $K \in \K$ as follows: for a local holomorphic function $a(z)$ and antiholomorphic function $b(\bar z)$,
\begin{equation}\label{E:st}
     S(ab) = a \left(e^{H_K}b\right) \mbox{ and } T(ab) = \left(e^{H_K} a\right)b.
\end{equation}
Since the mappings $f \mapsto Sf$ and $f \mapsto Tf$ are given by formal series of differential operators whose coefficients are polynomials in the fiber variables $\{\zeta_k,\bar\zeta_l\}$, formulas (\ref{E:st}) determine them completely and uniquely.
Finally, if $B = \exp \frac{1}{\nu}X$ is the formal Berezin transform of an arbitrary star product with separation of variables on $(M, g^{lk})$, then
\[
                         K = \sigma(X).
\]

Now let $\ast$ and $\tilde \ast$ be two deformation quantizations with separation of variables on a K\"ahler-Poisson manifold $(M,g^{lk})$. They have the same formal symplectic groupoid with separation of variables over $(M,g^{lk})$ and therefore there exists the corresponding deformation formal symplectic groupoid. It was shown above that the deformation groupoid of a pair of star products with separation of variables  has the property of separation of variables.

Let $\K[\e]$ denote the set of all elements $F = K + \e J \in C^\infty(\T,Z)[\e]$ such that the expansion of $F$ into homogeneous components starts at least with the quadratic term in the fiber variables, 
\[
F = F_2 + F_3 + \ldots
\]
and such that for any local holomorphic functions $a(z), \tilde a(z)$ and antiholomorphic functions $b(\bar z), \tilde b(\bar z)$ the following equations hold:
\begin{equation}\label{E:fpert}
  \{e^{H_F} a,\tilde a\}_{\T} = 0 \mbox{ and }  \{e^{H_F} b,\tilde b\}_{\T} = 0.
\end{equation}
Equations (\ref{E:fpert}) can be written in the recursive form analogous to 
(\ref{E:recurs}). One can prove along the same lines the following theorem.
\begin{theorem}
There is a bijection of the set of all infinitesimal deformations $g^{lk} + \e h^{lk}$ of K\"ahler-Poisson tensors on $M$ onto $\K[\e]$ which relates to each infinitesimal deformation $g^{lk} + \e h^{lk}$ a unique element $F = K + \e J \in \K[\e]$ such that
\begin{equation}\label{E:inits}
      K_2 = g^{lk} \zeta_k \bar \zeta_l \mbox{ and } J_2 = h^{lk} \zeta_k \bar \zeta_l.
\end{equation}
The function $F$ is even in the fiber variables, so that 
\[
K = K_2 + K_4 + \ldots \mbox{ and } J = J_2 + J_4 + \ldots.
\] 
Moreover, for each infinitesimal deformation $g^{lk} + \e h^{lk}$ on $M$ there exists a unique deformation formal symplectic groupoid with separation of variables over $(M, g^{lk} + \e h^{lk})$. The source and target mappings of the deformation groupoid, $S = S_0 + \e S_1$ and $T = T_0 + \e T_1$, are given by the equations 
\begin{equation}\label{E:stpert}
    S(ab) = a \left(e^{H_F}b\right) \mbox{ and } T(ab) = \left(e^{H_F} a\right)b,
\end{equation}
where $F$ is the element of $\K[\e]$ corresponding to $g^{lk} + \e h^{lk}$. The components $S_0$ and $T_0$ are the source and target mappings of the unique formal symplectic groupoid over $(M,g^{lk})$ and $K$ is the element of $\K$ corresponding to $g^{lk}$ so that
\[
      S_0 (ab) = a\left(e^{H_K}b \right) \mbox{ and } T_0(ab) = \left(e^{H_K} a\right)b.
\]
\end{theorem}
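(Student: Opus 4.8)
The plan is to mimic, in the $\e$-graded setting, the proof of Theorem~5 in \cite{CMP3}, extending every statement there by the first-order term in $\e$. The argument splits into three tasks: (i) constructing the bijection between infinitesimal deformations $g^{lk}+\e h^{lk}$ and elements $F\in\K[\e]$ with prescribed quadratic part, together with the evenness in the fiber variables; (ii) showing that the mappings $S,T$ defined by \eqref{E:stpert} satisfy the axioms \eqref{E:deformed} of the source and target of a deformation formal symplectic groupoid over $(M,g^{lk}+\e h^{lk})$; and (iii) checking uniqueness. Since $\e^2=0$, each of these reduces to the corresponding statement for $K$ (already in \cite{CMP3}) plus one linearized identity for $J$, so the real content is organizing the $\e$-expansion cleanly.

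For (i), I would write $F=K+\e J$ and expand the recursive form of \eqref{E:fpert} in powers of $\e$. The $\e^0$ part is exactly the recursion \eqref{E:recurs} for $K$, which by \cite{CMP3} has a unique solution with $K_2=g^{lk}\zeta_k\bar\zeta_l$, and that solution is even. The $\e^1$ part is a linear recursion for the components $J_n$: it expresses $\{\{J_n,a\}_{\T},\tilde a\}_{\T}$ (resp. the $b,\tilde b$ version) in terms of $J_{i}$ with $i<n$, the already-known $K_i$, and $h^{lk}$ through $J_2=h^{lk}\zeta_k\bar\zeta_l$. Exactly as in \cite{CMP3}, the operator $J_n\mapsto \{\{J_n,\cdot\}_{\T},\cdot\}_{\T}$ restricted to homogeneous functions that are ``of type $(1,1)$'' in the obvious sense is invertible because $g^{lk}$ is nondegenerate (one recovers $\p^2 J_n/\p\zeta_k\p\bar\zeta_l$ from these brackets and then integrates), so $J_n$ is determined uniquely and is polynomial in the $\zeta,\bar\zeta$ and in derivatives of $g^{lk},h^{lk}$; the linear recursion preserves evenness because the $\e^0$ data are even. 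One also checks $h^{lk}$ satisfies \eqref{E:jach} iff this process does not obstruct at $n=3$, giving the stated bijection.

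For (ii), the identities $E(Sf)=f$, $S(fg)=Sf\,Sg$ and their $T$-counterparts follow from \eqref{E:stpert} by the same formal-series manipulation as in \cite{CMP3}: $H_F$ is $\I$-adically locally nilpotent since $F$ starts in degree two, $e^{H_F}$ is an algebra endomorphism fixing holomorphic and antiholomorphic functions, and evaluation at the zero section is the counit. The separation-of-variables conditions $\{Sf,Tg\}_{\T}=0$ and the two bracket identities in \eqref{E:deformed} are precisely \eqref{E:fpert} together with the standard computation showing that $a\mapsto a$, $b\mapsto e^{H_F}b$ is a Poisson map onto the image — all of which go through verbatim once the brackets are extended $\e$-linearly. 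Expanding in $\e$ then recovers the component equations \eqref{E:unit}, \eqref{E:product}, \eqref{E:bracket}, with $S_0,T_0$ the groupoid of $(M,g^{lk})$ as in \eqref{E:sourcetarget}. The formulas for $S_0,T_0$ in terms of $K$ are the $\e^0$ truncation, already in \cite{CMP3}. Uniqueness in (iii) follows because, by the last sentence before the theorem, $F=\sigma(X)$ for the logarithm of the formal Berezin transform of any star product with separation of variables realizing the given deformation, and \eqref{E:stpert} recovers $S,T$ from $F$; alternatively, \eqref{E:recov} shows $S_1$ already determines $h^{lk}$, and the recursion determines everything else.

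The main obstacle I expect is the well-posedness of the linear recursion for $J$ in step (i): one must verify that the linearized operator $J_n\mapsto(\{\{J_n,a\},\tilde a\},\{\{J_n,b\},\tilde b\})$ is invertible on the relevant space of homogeneous polynomials in the fibers, i.e.\ that feeding in a solution for $K$ does not create an inhomogeneous term outside the range of that operator. This is the exact analogue of the solvability argument in \cite{CMP3}, and it again hinges on nondegeneracy of $g^{lk}$; the only new point is to confirm that the $\e^1$ source term, built from the even function $K$ and from $h^{lk}$, lands in the right subspace and preserves parity, which I would check by an induction on $n$ paralleling the one in \cite{CMP3}.
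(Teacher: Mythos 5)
Your overall plan---write \eqref{E:fpert} in the recursive form analogous to \eqref{E:recurs}, expand in powers of $\e$, and run the argument of \cite{CMP3} componentwise, with the $\e^0$ part giving $K$ and the $\e^1$ part a linear recursion for $J$---is exactly the route the paper intends (the paper itself gives no more detail than ``along the same lines as \cite{CMP3}''). But your justification of the key solvability/uniqueness step is wrong in a way that affects the scope of the result. Bracketing twice with a pair of local holomorphic functions only produces the \emph{pure} fiber derivatives: for holomorphic $a,\tilde a$ the expression $\{\{J_n,a\}_{\T},\tilde a\}_{\T}$ involves $\p^2 J_n/\p\zeta_k\p\zeta_p$ contracted with $\p a/\p z^k\,\p\tilde a/\p z^p$, and the antiholomorphic pair gives $\p^2 J_n/\p\bar\zeta_l\p\bar\zeta_q$; the mixed derivatives $\p^2 J_n/\p\zeta_k\p\bar\zeta_l$ are \emph{not} recovered, and no inversion of $g^{lk}$ enters anywhere. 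Uniqueness of $J_n$ for $n\geq 3$ follows instead from the elementary observation that a fiberwise homogeneous polynomial of degree at least $3$ whose pure second derivatives in the $\zeta$'s and in the $\bar\zeta$'s all vanish is at most bilinear in the fiber variables, hence zero. This matters because the theorem is stated for arbitrary, possibly degenerate, K\"ahler--Poisson tensors; an argument that, as you say, ``hinges on nondegeneracy of $g^{lk}$'' would prove only the pseudo-K\"ahler case. Likewise the restriction to homogeneous functions ``of type $(1,1)$'' has no meaning for the higher components $J_n$.

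The second gap is existence (the ``onto'' half of the bijection and the passage from \eqref{E:jach} to solvability). Your claim that the process ``does not obstruct'' beyond $n=3$ is not substantiated: the system for $J_n$ is overdetermined at every order, and compatibility of the inhomogeneous terms must be secured at each $n$, which is precisely where \eqref{E:jach} (and, at order $\e^0$, the Jacobi identity \eqref{E:jac}) is used in the \cite{CMP3}-style induction. In the context of this paper the cleanest repair is to anchor existence in the proposition giving \eqref{E:sandt}: the deformation groupoid with separation of variables over $(M,g^{lk}+\e h^{lk})$ is already constructed there with explicit source and target $e^{{\mathfrak D}+\e{\mathfrak E}}$, $e^{\bar{\mathfrak D}+\e\bar{\mathfrak E}}$, and one extracts the element $F=K+\e J\in\K[\e]$ from it (equivalently, uses it to verify compatibility of the recursion at all orders), rather than asserting that the source terms ``land in the range.'' With these two points corrected, your outline does reduce to the intended extension of the proof of Theorem 5 of \cite{CMP3}.
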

Given a function $f \in C^\infty(M)$, one can easily verify the formula
\begin{align}\label{E:var}
     \frac{d}{d\tau}\bigg |_{\tau =0}  e^{\left(H_K + \tau H_J\right)}f = \left(\frac{e^{\ad H_K} -1}{\ad H_K} H_J\right) e^{H_K}f = \\
\left\{\frac{e^{H_K} - 1}{H_K}J, e^{H_K}f \right\}.\nonumber
\end{align}
 Formulas (\ref{E:stpert}) and (\ref{E:var}) imply that
\begin{align*}
   S_1 (ab) = & a \left\{\frac{e^{H_K} - 1}{H_K}J, S_0 b \right \}_{\T} \mbox{ and }\\
 & T_1(ab) = \left\{\frac{e^{H_K} - 1}{H_K}J, T_0 a \right \}_{\T} b.
\end{align*}

\section{On the logarithm of the formal Berezin transform}

Let $\ast$ and $\tilde\ast$ be two deformation quantizations with separation of variables on a K\"ahler-Poisson manifold $(M,g^{lk})$. Write their corresponding formal Berezin transforms in the exponential form:
\[
     B = \exp\left \{ \frac{1}{\nu} X \right\} \mbox{ and } \tilde B = \exp\left \{ \frac{1}{\nu}\tilde X \right\}.
\]
It was shown in \cite{CMP3} that
\[
     \sigma(X) = \sigma(\tilde X) = K
\]
is the element of $\K$ corresponding to $g^{lk}$. Thus, $X - \tilde X \in \nu \N$. Set
\begin{equation}\label{E:funj}
      J = \sigma\left( \frac{1}{\nu}(X - \tilde X) \right) \mbox{ and } F = K + \e J.
\end{equation}
We want to show that $F\in \K[\e]$ and that the deformation groupoid of the pair of star products $\ast$ and $\tilde\ast$ is over $(M, g^{lk} + \e h^{lk})$, where  $g^{lk} + \e h^{lk}$ is the infinitesimal deformation of $g^{lk}$ corresponding to $F$.

\begin{proposition}
The function $F = K + \e J$ defined in (\ref{E:funj}) is an element of $\K[\e]$.
\end{proposition}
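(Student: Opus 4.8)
The statement has two parts: that the fiber‑degree expansion of $F$ starts at degree two, and that $F$ satisfies the two equations (\ref{E:fpert}). The first is immediate. The element $K\in\K$ starts at degree two; and $X,\tilde X$ both start with $\nu^2$ with equal $\nu^2$‑components (both equal to the operator $g^{lk}\p_k\p_{\bar l}$ attached to $g^{lk}$), so the natural operator $\frac{1}{\nu}(X-\tilde X)$ starts with $\nu^2$ and its $\sigma$‑symbol $J$ has no component of fiber‑degree $0$ or $1$. Hence $F=F_2+F_3+\ldots$. For the second part the plan is to route through the deformation formal symplectic groupoid of the pair $(\ast,\tilde\ast)$ constructed in Section 3, whose source $S=S_0+\e S_1$ and target $T=T_0+\e T_1$ are given by (\ref{E:szerotzero}) and (\ref{E:sonetonestar}). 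By the Proposition of Section 3 these maps satisfy axioms (\ref{E:unit})--(\ref{E:bracket}); in particular $\{Sf,Tg\}_{\T}=0$. By the Lemma of Section 4 they have separation of variables: $Sa=a$, $Tb=b$ for local holomorphic $a$ and antiholomorphic $b$, equivalently $S_0a=a$, $S_1a=0$, $T_0b=b$, $T_1b=0$.

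The key identity I would establish is
\[
   e^{H_F}a = Ta \ \text{ for local holomorphic }a,\qquad e^{H_F}b = Sb \ \text{ for local antiholomorphic }b .
\]
By (\ref{E:var}), applied with $K$ and $J$, this reduces to
\[
   T_1a=\Big\{\frac{e^{H_K}-1}{H_K}J,\,T_0a\Big\}_{\T},\qquad
   S_1b=\Big\{\frac{e^{H_K}-1}{H_K}J,\,S_0b\Big\}_{\T},
\]
together with the known facts $T_0a=\sigma(R_a)=e^{H_K}a$ and $S_0b=\sigma(L_b)=e^{H_K}b$ from \cite{CMP3} (recall $K=\sigma(X)$ and $B=\exp\frac{1}{\nu}X$). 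The two displayed formulas for $T_1$ and $S_1$ are the first‑order‑in‑$\e$ shadows of the \cite{CMP3} computation of $\sigma(R_a)$ and $\sigma(L_b)$, and I would prove them along the same lines: since $\ast$ and $\tilde\ast$ have separation of variables, for holomorphic $a$ and antiholomorphic $d$ one has $L_ba=b\ast a=\exp(\frac{1}{\nu}X)(ab)$, $\tilde L_ba=b\,\tilde\ast\,a=\exp(\frac{1}{\nu}\tilde X)(ab)$ and $R_ad=d\ast a=\exp(\frac{1}{\nu}X)(ad)$, $\tilde R_ad=\exp(\frac{1}{\nu}\tilde X)(ad)$, so $\frac{1}{\nu}(L_b-\tilde L_b)$ and $\frac{1}{\nu}(R_a-\tilde R_a)$ are expressed through $\frac{1}{\nu}(B-\tilde B)$; taking $\sigma$‑symbols and using $J=\sigma(\frac{1}{\nu}(X-\tilde X))$ yields the claim. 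I expect this to be the main obstacle, since it requires carrying the Berezin‑transform/source–target dictionary of \cite{CMP3} through one order in the nilpotent parameter, i.e. a non‑commutative variation computation for $\exp\frac{1}{\nu}X-\exp\frac{1}{\nu}\tilde X$; this is legitimate because $\frac{1}{\nu}(X-\tilde X)$ is a natural operator starting at $\nu^2$, so all the exponential series in play converge in the $\nu$‑adic topology.

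Granting the key identity, the conclusion is short. For local holomorphic $a,\tilde a$, using $S\tilde a=\tilde a$ and the axiom $\{Sf,Tg\}_{\T}=0$,
\[
   \{e^{H_F}a,\tilde a\}_{\T}=\{Ta,S\tilde a\}_{\T}=0,
\]
and symmetrically, for local antiholomorphic $b,\tilde b$, using $T\tilde b=\tilde b$, $\{e^{H_F}b,\tilde b\}_{\T}=\{Sb,T\tilde b\}_{\T}=0$. Thus both equations of (\ref{E:fpert}) hold and $F=K+\e J\in\K[\e]$. Alternatively one need not isolate the identity $e^{H_F}a=Ta$: expanding $e^{H_F}a$ by (\ref{E:var}), the term $\{e^{H_K}a,\tilde a\}_{\T}$ vanishes because $K\in\K$, and the Jacobi identity then reduces (\ref{E:fpert}) to $\{T_1a,\tilde a\}_{\T}=0$, which is precisely the $\e$‑component of $\{Sf,Tg\}_{\T}=0$ at $f=\tilde a$, $g=a$ (using $S_1\tilde a=0$), once one knows $T_1a=\{\frac{e^{H_K}-1}{H_K}J,\,T_0a\}_{\T}$ — so the computational heart is unchanged.
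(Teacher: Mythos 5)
Your overall architecture coincides with the paper's: you reduce membership of $F=K+\e J$ in $\K[\e]$ to the identities $e^{H_F}b=Sb$ and $e^{H_F}a=Ta$ for the deformation groupoid of the pair $(\ast,\tilde\ast)$, i.e.\ (via (\ref{E:var})) to the formulas $S_1b=\{\frac{e^{H_K}-1}{H_K}J,\,S_0b\}_{\T}$ and $T_1a=\{\frac{e^{H_K}-1}{H_K}J,\,T_0a\}_{\T}$, and then conclude exactly as the paper does from the axiom $\{Sf,Tg\}_{\T}=0$ together with the separation-of-variables property $Sa=a$, $Tb=b$; your explicit check that $F$ starts in fiber degree two is also correct (and more explicit than the paper). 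However, the step you yourself flag as the main obstacle --- establishing the two displayed formulas for $S_1b$ and $T_1a$ --- is precisely where essentially all of the paper's proof lives, and your sketch of it does not go through as written.

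Concretely: $S_1b$ is defined in (\ref{E:sonetonestar}) as the $\sigma$-symbol of the operator $\frac{1}{\nu}(L_b-\tilde L_b)$, and knowing its values $(L_b-\tilde L_b)a=(B-\tilde B)(ab)$ on local holomorphic functions is not by itself the symbol; one still needs an argument that this action determines the operator (for instance that $L_b,\tilde L_b$ differentiate only in holomorphic directions, or better, the conjugation formulas (\ref{E:lbra})). More seriously, ``taking $\sigma$-symbols'' of $\frac{1}{\nu}(B-\tilde B)$ is not licit: $B$ and $\tilde B$ are not natural operators (the coefficient of $\nu^k$ in $B$ contains $X_2^k/k!$, of order $2k$), so $\sigma$ is undefined on them, and the naturality of $\frac{1}{\nu}(X-\tilde X)$ does not transfer to the exponentials. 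The paper circumvents both issues by using (\ref{E:lbra}) to write $\frac{1}{\nu}(L_b-\tilde L_b)$ and $\frac{1}{\nu}(R_a-\tilde R_a)$ as $U_1(f)$ with $U_t(f)=\frac{1}{\nu}\bigl(e^{t\ad(\frac{1}{\nu}X)}f-e^{t\ad(\frac{1}{\nu}\tilde X)}f\bigr)$, where each conjugation is natural with symbol $e^{tH_K}f$; it then computes $u=\sigma(U_t(f))$ by solving the ODE $\frac{du}{dt}=H_Je^{tH_K}f+H_Ku$ with $u(0)=0$, obtaining (\ref{E:u}) and hence (\ref{E:sone}) at $t=1$. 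This Duhamel-type variation computation is the missing content of your proposal; until it (or an equivalent argument) is supplied, the verification of (\ref{E:fpert}) remains incomplete, even though the reduction and the concluding steps are correct and match the paper.
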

\begin{proof}
Given a function $f \in C^\infty(M)$, one can check that the operators
\[
      e^{t\left(\frac{1}{\nu}X\right)}fe^{-t\left(\frac{1}{\nu}X\right)} = e^{t \ad \left(\frac{1}{\nu}X\right)} f \mbox{ and } e^{t\left(\frac{1}{\nu}\tilde X\right)}fe^{-t\left(\frac{1}{\nu}\tilde X\right)} = e^{t \ad \left(\frac{1}{\nu}\tilde X\right)} f 
\]
are natural and have the same $\sigma$-symbol $e^{t H_K}f$.
The operator
\[
      U_t(f) = \frac{1}{\nu}\left(e^{t \ad \left(\frac{1}{\nu}X\right)} f -e^{t \ad \left(\frac{1}{\nu}\tilde X\right)} f \right)
\]
is thus also natural. Set
\[
    u = \sigma \left(U_t(f)\right).
\]
We have $u|_{t=0}=0$ and
\begin{align*}
  \frac{du}{dt} =  \sigma &\left( \frac{1}{\nu} \left[ \frac{1}{\nu}X,e^{t \ad \left(\frac{1}{\nu}X\right)}f \right] - \frac{1}{\nu}\left[ \frac{1}{\nu}\tilde X,e^{t \ad \left(\frac{1}{\nu}\tilde X\right)}f \right] \right) =\\
 \sigma &\left( \frac{1}{\nu} \left[ \frac{1}{\nu}(X - \tilde X),e^{t \ad \left(\frac{1}{\nu}X\right)}f \right] \right) +\\
  \sigma &\left( \frac{1}{\nu} \left[\tilde X,\frac{1}{\nu}\left(e^{t \ad \left(\frac{1}{\nu}X\right)}f - e^{t \ad \left(\frac{1}{\nu}\tilde X\right)}f \right) \right]\right)= \\
 & \left\{J, e^{t H_K}f \right\}_{\T} + \{K,u\}_{\T} = H_J e^{t H_K}f + H_K u.
\end{align*}
Set
\[
    v = e^{-tH_K} u.
\]
Then $v|_{t=0}=0$ and
\begin{align*}
  \frac{dv}{dt} = e^{-tH_K} \frac{du}{dt} - e^{-tH_K} H_K u = e^{-tH_K}\left(H_J e^{t H_K}f + H_K u\right)- e^{-tH_K} H_K u =\\
\left(e^{-tH_K} H_J e^{tH_K}\right) f=  \left(e^{-t\ad H_K} H_J\right) f = \left\{ e^{-tH_K} J, f \right\}_{\T}.
\end{align*}
It follows that
\[
     v =  \left\{ \frac{1 -e^{-tH_K}}{H_K}J, f \right\}_{\T},
\]
whence we derive that
\begin{equation}\label{E:u}
  u = \left\{ \frac{e^{tH_K} - 1}{H_K}J, e^{tH_K} f \right\}_{\T}.
\end{equation}
Let $a$ and $b$ be a local holomorphic and a local antiholomorphic function on $M$, respectively. It was proved in \cite{CMP3} that 
\begin{equation}\label{E:lbra}
    L_b = B b B^{-1}, \ R_a = B a B^{-1}, \ \tilde L_b = \tilde B b \tilde B^{-1}, \ \tilde R_a = \tilde B a \tilde B^{-1}.
\end{equation}
 Setting $t=1$ and $f = b$ in (\ref{E:u}) we obtain from formulas (\ref{E:sonetonestar}) and (\ref{E:lbra}) and the definition of the function $u$  that
\begin{equation}\label{E:sone}
     S_1 b = u_1(b) =  \left\{ \frac{e^{H_K} - 1}{H_K}J, e^{H_K} b \right\}_{\T} = 
\left\{ \frac{e^{H_K} - 1}{H_K}J, S_0 b \right\}_{\T},
\end{equation}
where $S_0$ and $S_1$ are the components of the source mapping of the deformation groupoid of the pair $(\ast,\tilde\ast)$. Setting $t=1$ and $f=a$ in (\ref{E:u}) we get
\[
   T_1 a = \left\{ \frac{e^{H_K} - 1}{H_K}J, T_0 a \right\}_{\T},
\]
where $T_0$ and $T_1$ are the components of the target mapping of that groupoid. Using formulas (\ref{E:var}) and (\ref{E:sone}), we obtain that for the function $F = K + \e J$,
\[
     e^{H_F} b = e^{H_K + \e H_J} b = S_0 b + \e S_1 b = Sb.
\]
Similarly,
\[
   e^{H_F} a = Ta.
\]
Since $Sf$ and $Tg$ Poisson commute for any $f,g$,  this implies that for any local holomorphic functions $a, \tilde a$ and local antiholomorphic functions $b,\tilde b$,
\[
    \left\{ e^{H_F} a, \tilde a\right\} = \{Ta, S\tilde a\} = 0 \mbox{ and } \left\{ e^{H_F} b, \tilde b\right\} = \{Sb, T\tilde b\} = 0.
\]
Thus we have shown that $F \in \K[\e]$. 
\end{proof}
It follows from the definition (\ref{E:funj}) of the function $J$ that
\[
    J_2 = h^{lk} \zeta_k \bar \zeta_l = \s_2(X_3 - \tilde X_3).
\]
For a local holomorphic function $a$ and a local antiholomorphic function $b$ we have that $Ba =a, Bb = b$ and therefore $Xa =Xb =0$. For the same reason, $\tilde Xa = \tilde X b = 0$. It  follows that
\begin{equation}\label{E:xx}
     X_3 - \tilde X_3 = h^{lk} \frac{\p^2}{\p z^k \p \bar z^l}.
\end{equation}
Starting with formula (\ref{E:xx}), one can infer that the deformation groupoid of the pair $(\ast,\tilde\ast)$ is the unique deformation formal symplectic groupoid with separation of variables over $(M, g^{lk} + \e h^{lk})$. Also, the function $F = K + \e J$ can be recursively calculated from equations (\ref{E:fpert}).

Finally, given a star product $\ast$ with separation of variables on a K\"ahler-Poisson manifold $(M,g^{lk})$, we want to determine the $\sigma$-symbol of the natural operator
\[  
     Y = \nu^2 X_3 + \nu^4 X_5 + \ldots
\]
obtained from the Berezin transform $B = \exp\frac{1}{\nu}X$ of that star product. It was shown in \cite{CMP3} that the dual star product $\tilde \ast$ given by the formula
\[
      u \tilde\ast v = B^{-1} (Bv \ast Bu)
\]
is a star product with separation of variables on the 
K\"ahler-Poisson manifold $(M, -g^{lk})$. 
Its formal Berezin transform $\tilde B$ is the inverse of $B$, so that \[
      \tilde B = B^{-1} = \exp\left\{-\frac{1}{\nu}X \right\} = \exp\{-\nu X_2 - \nu^2 X_3 - \ldots\}.
\]
Now we want to replace the formal parameter $\nu$ in the dual star product $\tilde\ast$ with $-\nu$. The resulting star product $\hat\ast$ is again a star product with separation of variables on the K\"ahler-Poisson manifold $(M, g^{lk})$. Its formal Berezin transform $\hat B = \exp\frac{1}{\nu}\hat X$ is such that $\hat X_k = (-1)^k X_k$, i.e.,
\[
  \hat X = \nu^2 X_2 - \nu^3 X_3 - \nu^4 X_4 + \ldots.
\]
Therefore, 
\[
    Y = \frac{1}{2\nu}(X - \hat X).
\]
Now we see from formula (\ref{E:xx}) that the $\sigma$-symbol of $Y$ can be found from the deformation groupoid of the pair $(\ast,\hat\ast)$ which is the deformation groupoid with separation of variables over $(M, g^{lk} +\e h^{lk})$, where the tensor $h^{kl}$ is expressed in terms of the operator $X_3$ as follows:
\begin{equation}\label{E:xthree}
     X_3 = \frac{1}{2}(X_3 - \hat X_3) = \frac{1}{2} h^{lk} 
\frac{\p^2}{\p z^k \p \bar z^l}.
\end{equation}
Let $F = K+\e J$ be the element of $\K[\e]$ corresponding to $g^{lk} +\e h^{lk}$. Then, finally,
\begin{equation}\label{E:yj}
      \sigma(Y) = \frac{1}{2}J.
\end{equation}
We have proved the following theorem.
\begin{theorem}
Given a star product with separation of variables on a K\"ahler-Poisson manifold $(M, g^{lk})$ with the formal Berezin transform $B = \exp \left(\frac{1}{\nu}X\right)$,
where $X = \nu^2 X_2 + \nu^3 X_3 + \ldots$, then the formal differential operator
\[
   Y = \nu^2 X_3 + \nu^4 X_5 +\ldots
\] 
is natural, the operator $X_3$ is given by formula (\ref{E:xthree}) for some (1,1)-tensor $h^{lk}$, $g^{lk} + \e h^{lk}$ is an infinitesimal deformation of the K\"ahler-Poisson tensor $g^{lk}$, and the function $F = K + \e J \in \K[\e]$ corresponding to the deformation formal symplectic groupoid with separation of variables over $\left(M,g^{lk} + \e h^{lk}\right)$ is such that formula (\ref{E:yj}) holds.
\end{theorem}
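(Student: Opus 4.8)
The plan is to realize $\sigma(Y)$ as one half of the first-order component $J$ of the function $F = K + \e J \in \K[\e]$ of the deformation formal symplectic groupoid attached to a suitable pair of star products with separation of variables, and then to read off all four assertions from the results of the preceding two sections.

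First I would introduce the auxiliary star products. Starting from $\ast$, form the dual star product $\tilde\ast$ by $u\,\tilde\ast\,v = B^{-1}(Bv \ast Bu)$; as recalled above, $\tilde\ast$ has separation of variables on $(M,-g^{lk})$ with formal Berezin transform $\tilde B = B^{-1} = \exp\{-\frac1\nu X\}$. Substituting $-\nu$ for $\nu$ in $\tilde\ast$ produces a star product $\hat\ast$ with separation of variables on $(M, g^{lk})$ whose Berezin transform is $\hat B = \exp\frac1\nu\hat X$ with $\hat X_k = (-1)^k X_k$. Comparing the two series term by term gives $X - \hat X = 2\nu^3 X_3 + 2\nu^5 X_5 + \ldots = 2\nu Y$, hence $Y = \frac1{2\nu}(X-\hat X)$; the assertion that $Y$ is natural is precisely the evenness statement $\s_{2k+1}(X_{2k+1})=0$ recalled above.

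Next, since a K\"ahler-Poisson manifold carries a unique formal symplectic groupoid with separation of variables, $\ast$ and $\hat\ast$ share one, so the deformation formal symplectic groupoid of the pair $(\ast,\hat\ast)$ is defined; by the construction of Section 3 it lives over $(M, g^{lk} + \e h^{lk})$ for the Poisson $2$-cocycle $h^{lk}$ determined by that pair, and being of type $(1,1)$ this $h^{lk}$ satisfies (\ref{E:jach}), so $g^{lk} + \e h^{lk}$ is an infinitesimal deformation of the K\"ahler-Poisson tensor. Applying the preceding Proposition to $(\ast,\hat\ast)$, the function $F = K + \e J$ with $K = \sigma(X)$ and $J = \sigma(\frac1\nu(X-\hat X)) = \sigma(2Y) = 2\sigma(Y)$ belongs to $\K[\e]$, which already establishes (\ref{E:yj}). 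To pin down $h^{lk}$, I would apply (\ref{E:xx}) to the pair $(\ast,\hat\ast)$: since $\hat X_3 = -X_3$ one gets $2X_3 = h^{lk}\,\p^2/\p z^k\p\bar z^l$, which is (\ref{E:xthree}), and correspondingly $J_2 = h^{lk}\zeta_k\bar\zeta_l$. By Theorem 1 the element $F\in\K[\e]$ is then the one corresponding to the infinitesimal deformation $g^{lk}+\e h^{lk}$, and it is the function attached to the unique deformation groupoid with separation of variables over $(M, g^{lk} + \e h^{lk})$, namely the deformation groupoid of $(\ast,\hat\ast)$.

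The only genuine work is the bookkeeping that matches the two descriptions: one must check that the function $F$ obtained from $(\ast,\hat\ast)$ through (\ref{E:funj}) coincides with the one Theorem 1 assigns to $g^{lk}+\e h^{lk}$. This is taken care of by the preceding Proposition (which places $F$ in $\K[\e]$) together with the normalizations $K_2 = g^{lk}\zeta_k\bar\zeta_l$, $J_2 = h^{lk}\zeta_k\bar\zeta_l$ and the uniqueness half of the bijection in Theorem 1; no ingredient beyond those already proved is required, so no new obstacle arises.
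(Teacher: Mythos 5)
Your proposal is correct and follows essentially the same route as the paper: pass to the dual star product, flip the sign of $\nu$ to get $\hat\ast$ with $\hat X_k = (-1)^k X_k$, write $Y = \frac{1}{2\nu}(X-\hat X)$, and then read off (\ref{E:xthree}) and (\ref{E:yj}) from the deformation groupoid of the pair $(\ast,\hat\ast)$ via the preceding Proposition, formula (\ref{E:xx}), and the bijection of Theorem 1. Your closing identification of the function $F$ from (\ref{E:funj}) with the one Theorem 1 assigns to $g^{lk}+\e h^{lk}$ (using $K_2$, $J_2$ and uniqueness) is exactly the bookkeeping the paper relies on.
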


\end{document}